\newtheorem{Theorem}{Theorem}
\newtheorem{Proposition}{Proposition}
\newtheorem{Lemma}{Lemma}
\newtheorem{Example}{Example}
\newtheorem{Remark}{Remark}
\newtheorem{Cor}{Corollary}
\newtheorem{Assumption}{Assumption}
\newtheorem{Definition}{Definition}
\DeclareMathOperator{\rank}{rank}
\def\@linkcolor{blue}
  \def\@anchorcolor{red}
  \def\@citecolor{red}
  \def\@filecolor{red}
  \def\@urlcolor{black}
  \def\@menucolor{red}
  \def\@pagecolor{red}
  \edef\x{%
    \edef\noexpand\x{%
      \endgroup
      \noexpand\toks@{%
        \catcode 96=\noexpand\the\catcode`\noexpand\`\relax
        \catcode 61=\noexpand\the\catcode`\noexpand\=\relax
      }%
    }%
    \noexpand\x
  }%
\begin{document}

\title{Fixed-Time Stable Gradient Flows: Applications to Continuous-Time Optimization}
\author{Kunal Garg, \IEEEmembership{Student Member, IEEE} and Dimitra Panagou, \IEEEmembership{Senior Member, IEEE}
\thanks{The authors are with the Department of Aerospace Engineering, University of Michigan, Ann Arbor, MI, USA; \texttt{\{kgarg, dpanagou\}@umich.edu}.}
\thanks{The authors would like to acknowledge the support of the Air Force Office of Scientific
Research under award number FA9550-17-1-0284.}}

\maketitle

\begin{abstract}   
Continuous-time optimization is currently an active field of research in optimization theory; prior work in this area has yielded useful insights and elegant methods for proving stability and convergence properties of the continuous-time optimization algorithms. This paper proposes novel gradient-flow schemes that yield convergence to the optimal point of a convex optimization problem within a \textit{fixed} time from any given initial condition for unconstrained optimization, constrained optimization, and min-max problems. It is shown that the solution of the modified gradient flow dynamics exists and is unique under certain regularity conditions on the objective function, while fixed-time convergence to the optimal point is shown via Lyapunov-based analysis. The application of the modified gradient flow to unconstrained optimization problems is studied under the assumption of gradient-dominance, a relaxation of strong-convexity. Then, a modified Newton's method is presented that exhibits fixed-time convergence under some mild conditions on the objective function. Building upon this method, a novel technique for solving convex optimization problems with linear equality constraints that yields convergence to the optimal point in fixed time is developed. 
More specifically, constrained optimization problems formulated as min-max problems are considered, and a novel method for computing the optimal solution in fixed-time is proposed using the Lagrangian dual. 
Finally, the general min-max problem is considered, and a modified scheme to obtain the optimal solution of saddle-point dynamics in fixed time is developed.
Numerical illustrations that compare the performance of the proposed method against Newton's method, rescaled-gradient method, and Nesterov's accelerated method are included to corroborate the efficacy and applicability of the modified gradient flows in constrained and unconstrained optimization problems.   
\end{abstract}

\section{Introduction}
\subsection{Motivation}
Study of continuous-time optimization methods has been a very important part of the optimization theory from very early days \cite{brown1989some}. Research in this area continues to this day with the aim of developing and studying differential equations that model the commonly used discrete-time optimization algorithms \cite{wibisono2015accelerated,wibisono2016variational,su2014differential}. Establishing connections between ordinary differential equations (ODEs) and optimization has been an active topic of interest, see \cite{su2014differential,cherukuri2017saddle,dhingra2018proximal} and the references therein. The theory of ODEs offers useful insights into optimization theory and the corresponding techniques \cite{su2014differential}; some of the notable examples, as listed in \cite{su2014differential}, include linear regression via ODEs induced by linearized Bregman iteration algorithm \cite{osher2016sparse} and a continuous-time Nesterov-like accelerated algorithm in the context of control design \cite{durr2012class}. The continuous-time perspective of optimization problems provides simple and elegant proofs for the convergence of solutions to the equilibrium points using Lyapunov stability theory \cite{krichene2015accelerated}; this has led to further studies in unconstrained optimization \cite{brown1989some,helmke2012optimization}, constrained optimization \cite{schropp2000dynamical}, and more recently, saddle-point dynamics \cite{cherukuri2017saddle,cherukuri2017role}. It is worth noticing that while there is a lot of work on continuous-time optimization, most of it addresses asymptotic or exponential convergence of the solutions to the optimal point, i.e., convergence as time tends to infinity. In this paper, novel continuous-time optimization schemes are developed that possess fixed-time convergence guarantees, i.e., guarantees that the solutions of the considered ODEs converge to the optimal point of the corresponding optimization problem within a fixed time that is independent of the initial conditions.

\subsection{Gradient flows: theory and applications}
It is well known that the strict minima of a locally convex function $f:\mathbb R^n\rightarrow \mathbb R$ are stable equilibria of the \textit{gradient-flow} (GF) dynamics $\dot x = -\nabla f(x)$, and that, if the sub-level sets of $f$ are compact, then the trajectories converge asymptotically to the set of critical points of $f$. In recent years, GFs have been employed in a wide range of applications, including image processing \cite{clarenz2001relations} and motion planning \cite{cortes2006finite}. 
Details on design of GFs for optimization problems can be found in  \cite{helmke2012optimization}; for an overview of convex optimization, the reader is referred to \cite[Chapter 4-5]{boyd2004convex}.

There is a plethora of work on asymptotic convergence analysis of GF, for an overview, see \cite{wibisono2015accelerated,wibisono2016variational}. Recent work, for example \cite{su2014differential}, has focused on exponential stability of the GF based methods. The \textit{strong} or \textit{strict convexity} of the objective function is a standard assumption for exponential stability. As shown in \cite{karimi2016linear}, the condition can be relaxed by assuming that the objective function satisfies the  \textit{Polyak-{\L}ojasiewicz inequality} (PL inequality), i.e., the objective function is \textit{gradient dominated}. In \cite{nesterov2008accelerating}, the authors develop cubic regularization of Newton's method with super-linear convergence rate. Other accelerated methods include \textit{Bregman-Lagrangian} flows \cite{wibisono2016variational}, where instead of standard gradient flow, Euler-Lagrange equations for the Bregman-Lagrange flow are studied for super-linear convergence.

Another set of problems where GF is used is the \textit{saddle-point dynamics} for \textit{min-max problems}, where a multivariate function needs to be minimized over one set of variables and maximized over another set of variables. Saddle-point dynamics and its variations have been used extensively in the design and analysis of distributed feedback controllers \cite{wang2011control} and optimization algorithms in several domains, including active power loss minimization \cite{ma2013distributed}, network optimization \cite{feijer2010stability}, and zero-sum games \cite{gharesifard2013distributed} (see \cite{benzi2005numerical} for a detailed presentation on various applications where saddle-point dynamics naturally arise). 
In \cite{feijer2010stability}, the authors show asymptotic stability of the saddle-point dynamics, and apply the developed methods to internet-congestion control and network-utility maximization. 
More recently, in \cite{duchi2018minimax}, the authors develop general techniques for deriving minimax bounds under local differential privacy constraints. Lagrangian based primal-dual problems are another set of problems where min-max problems naturally arise. In \cite{cherukuri2017role,cherukuri2017saddle}, the authors discuss the conditions under which the saddle-point dynamics exhibit global asymptotic convergence. In \cite{qu2019exponential,dhingra2018proximal}, the authors show global exponential stability of the gradient-based method for primal-dual gradient dynamics under strong convexity-concavity assumption. 

\subsection{Finite-time and fixed-time stability}
In the seminal work \cite{bhat2000finite}, the authors introduced the notion of finite-time stability (FTS), where the convergence of the solutions to the equilibrium is guaranteed within a finite time, in contrast to asymptotic or exponential convergence where the solutions converge as time goes infinity. 
The authors give sufficient conditions in terms of existence of a Lyapunov function for FTS. 
Under this notion, the settling time, or time of convergence, depends upon the initial condition. A stronger notion, called fixed-time stability (FxTS), is developed in \cite{polyakov2012nonlinear}, where the settling time is uniformly bounded for all initial conditions. {The authors in \cite{bhat2000finite,polyakov2012nonlinear} discuss also the robustness of a FTS and a FxTS equilibrium, respectively; they show that the convergence properties are preserved under a class of additive vanishing disturbances, and that the trajectories of a system with a FTS or FxTS equilibrium, under non-vanishing disturbances, converge to a smaller neighborhood of the equilibrium point, as compared to a system whose equilibrium is asymptotically or exponentially stable.}
The primary motivation of the work in this paper is to study FxTS of the gradient-based methods with applications to convex optimization problems.

\subsection{Related prior work on continuous-time optimization}
In \cite{cortes2006finite}, the authors introduce normalized gradient flows to show finite-time convergence of the solutions to the optimal point. The authors in \cite{chen2018convex} consider convex optimization problems with equality constraints under strong convexity of the objective function, and design a discontinuous dynamics that converges to the optimal solution in finite time. Finite-time distributed optimization is studied in \cite{song2016finite,pan2018distributed}, where the authors assume very specific initial conditions, such that the sum of the gradient of the objective functions is zero. In \cite{li2017fixed}, the authors design a sliding-mode based technique for distributed optimization with fixed-time convergence guarantees assuming that the objective functions are strongly convex. In \cite{sanchez2014fixed}, a method of finding the optimal solution of a linear program (LP) in fixed time is proposed. In \cite{feng2017finite} and \cite{santilli2018finite}, the authors design finite-time converging schemes for distributed optimization where the objective function is of sum of quadratic functions and strictly convex functions, respectively. 

In the aforementioned work \cite{cortes2006finite,li2017fixed,sanchez2014fixed,feng2017finite,santilli2018finite}, the resulting dynamics are discontinuous, and the solutions are understood in the sense of Filippov. While \cite{cortes2006finite} mentions that the paths traced by the discontinuous dynamics and the nominal gradient flow $\dot x = -\nabla f(x)$ are identical, none of the other papers show uniqueness of the solutions in forward-time for the considered discontinuous dynamics. In this work, modified gradient flows are designed with continuous dynamics, and existence and uniqueness of solutions for all times and for all initial conditions is proven. Moreover, 
In this paper, fixed-time convergence is considered, in contrast to the finite-time convergence in \cite{cortes2006finite,song2016finite,pan2018distributed,feng2017finite}, where the time of convergence grows unbounded with initial conditions. 

\subsection{Contributions of the paper}

In this paper, modified GF schemes are designed for unconstrained and constrained convex optimization problems, as well as for min-max problems, with \textit{fixed-time} convergence guarantees. 
In \cite{song2016finite,pan2018distributed}, the authors assume that the Hessian of the objective function is Lipschitz continuous. This assumption as well as the strong-convexity assumption in \cite{chen2018convex,li2017fixed} are relaxed in this work, and it is shown that fixed-time convergence can be guaranteed for a larger class of problems where the objective function satisfies the PL inequality. In contrast to \cite{cortes2006finite,sanchez2014fixed}, convex optimization problems with linear equality constraints are studied in this paper, and a novel method is proposed to obtain the optimal point in fixed time under certain conditions on the smoothness and convexity of the objective function. In summary, contributions of the paper are as follows:
\begin{itemize}
    \item[a.] \textbf{FxTS-GF for unconstrained optimization}: a novel GF scheme with fixed-time convergence guarantees is proposed for unconstrained convex optimization problems under gradient-dominance; 
    \item[b.] \textbf{FxTS Newton's method for unconstrained optimization}: a Newton's-based method with fixed-time convergence guarantees is developed for the minimization of a strictly-convex function; 
    \item[c.] \textbf{FxTS-GF for constrained optimization}: a novel method to solve constrained optimization problems with equality constraints in fixed time is proposed, when the conjugate of the objective function is known in closed-form;
    \item[d.] \textbf{FxTS saddle-point dynamics for min-max problems}: a modified saddle-point dynamics with fixed-time convergence guarantees is designed for min-max problems under some mild conditions; 
    \item[e.] \textbf{FxTS saddle-point dynamics for constrained optimization}: it is shown that the proposed modified saddle-point dynamics can be used to solve constrained optimization problems when the conjugate function is not known in closed-form.
    \item[f.] \textbf{Numerical illustrations}: various numerical examples are presented to demonstrate that the proposed method achieves super-linear, fixed-time convergence with Euler discretization. It is also demonstrated that in comparison to the nominal Newton's method, 
    and the rescaled gradient method in \cite{wibisono2016variational}, the proposed method achieves faster convergence in terms of number of iterations, while requiring lower computational (wall-clock) time, which corroborates the practical applicability of the proposed methods.
\end{itemize}
To the best of authors' knowledge, this is the first work that establishes FxTS of GF-based techniques, and demonstrates their application to nonlinear constrained optimization and saddle-point dynamics. 
Though the theory presented in this paper treats continuous-time dynamics, the discrete-time implementation manifests the applicability of the proposed method in practice. Recent work on rate-preserving schemes \cite{wibisono2016variational} and \textit{consistent-discretization} schemes \cite{polyakov2018consistent,polyakov2019consistent} for finite or fixed-time stable dynamical systems motivates the future work of studying discretization schemes for the proposed method, which would guarantee convergence of the solutions in finite or fixed number of steps for any initial condition (see Section \ref{Discussions} for a detailed discussion on the matter).

\subsection{Organization}
The paper is organized as follows: Section \ref{Sec OV} presents an overview of the theory of FTS and FxTS, as well as an overview of convex optimization. Section \ref{FT opt} presents modified gradient flows for unconstrained optimization and constrained optimization problems with linear constraints, and fixed-time convergence to the optimal point is shown. In Section \ref{FT SP}, the min-max problem is studied for the general saddle-point dynamics. In Section \ref{simulations}, three numerical examples are presented to corroborate applicability of the proposed method; namely, an instance of a support-vector machine, a quadratic program, and a min-max problem. Section \ref{Discussions} discusses the limitation of the theory of FTS or FxTS for continuous-time dynamical systems when it comes to discrete-time or discretized settings, and lays out the foundation for future work in the open areas. The conclusions and plans on future work are summarized in Section \ref{Conclusions}.

\section{Background and Preliminaries}\label{Sec OV}
\subsection{Notation}
The set of reals is denoted by $\mathbb R$. The Euclidean norm of $x\in \mathbb R^n$ is denote by $\|x\|$, and its transpose, by $x^T$. For a given function $f$, $f^\star$ denotes the optimal value of the objective function for the given optimization problem and $x^\star$ denotes the optimal point, i.e., $f(x^\star) = f^\star$. The conjugate of the function $f$ is denoted as $f^*$ and is defined as $f^*(y) =  \sup\limits_{x\in\mathbb R^n}(y^Tx-f(x))$\footnote{Note the difference between $^\star$ for optimality and $^*$ for the conjugate.}. The notation $f\in C^k(U,V)$ is used for a function $f:U\rightarrow V$, $U\subseteq \mathbb R^n, V\subseteq \mathbb R^m$ which is $k-$times continuously differentiable, and $f\in C^{1,1}_{loc}(U,V)$ is used to denote a continuously differentiable function whose gradient is locally Lipschitz continuous on $U$. The notation $\nabla f: \mathbb R^n\rightarrow \mathbb R^n$ and $\nabla^2f:\mathbb R^n\rightarrow \mathbb R^{n\times n}$ is used to denote the gradient and the Hessian of the function $f\in C^2(\mathbb R^n,\mathbb R)$, respectively. For a multivariate function $f\in C^2(\mathbb R^n,\mathbb R)$, the partial derivatives are denoted as $\nabla_{x_1} f(x) \triangleq \frac{\partial f}{\partial x_1}(x)$ and $\nabla_{x_1x_2} f(x) \triangleq \frac{\partial^2 f}{\partial x_1 \partial x_2}(x)$, where $x_1\in \mathbb R^{r_1}, x_2\in \mathbb R^{r_2}$, $r_1, r_2\leq n$. A positive definite (respectively, semi-definite) matrix $A\in \mathbb R^{n\times n}$ is denoted as $A\succ 0$ (respectively, $A\succeq 0$); for $A, B\in \mathbb R^{n\times n}$, $A\succ B$ (respectively, $A\succeq B$) is used when $A-B \succ 0$ (respectively, $A-B\succeq 0$). For sake of brevity, the argument $x$ of a function or its derivatives is dropped, whenever clear from the context.

\subsection{Mathematical preliminaries}
First, an overview of the mathematical preliminaries and some useful results are presented. Consider the system
\begin{equation}\label{ex sys}
\dot x = f(x),
\end{equation}
where $x\in \mathbb R^n$, $f: \mathbb R^n \rightarrow \mathbb R^n$ and $f(0)=0$. Assume that the solution to \eqref{ex sys} exists, is unique, and continuous for any $x(0)\in \mathbb R^n$, for all $t\geq 0$.

\begin{Definition}[\hspace{-0.1pt}\cite{bhat2000finite}]
The origin is said to be an FTS equilibrium of \eqref{ex sys} if it is Lyapunov stable and \textit{finite-time convergent}, i.e., for all $x(0) \in \mathcal N \setminus\{0\}$, where $\mathcal N$ is some open neighborhood of the origin, $\lim_{t\to T} x(t)=0$, where $T = T(x(0))<\infty$. The origin is said to be a globally FTS equilibrium if $\mathcal N = \mathbb R^n$.
\end{Definition}

\noindent Here, $T$ is called as the settling time.  Lyapunov conditions for FTS of the origin for system \eqref{ex sys} are as follows.

\begin{Lemma}[\hspace{-0.1pt}\cite{bhat2000finite}]\label{FTS Bhat}
Suppose there exists a positive definite function $V\in C^1(\mathcal D,\mathbb R)$, where $\mathcal D\subset\mathbb R^n$ is a neighborhood of the origin, constants $c>0$ and $\alpha \in (0, 1)$, and an open neighborhood $\mathcal{V}\subseteq \mathcal{D}$ of the origin such that 
 \begin{equation} \label{FTS Lyap}
     \dot V(x) + cV(x)^\alpha \leq 0, \; \forall x\in \mathcal{V}\setminus\{0\}.
 \end{equation}
Then, the origin is an FTS equilibrium of \eqref{ex sys}. Moreover, the settling time $T$ satisfies $T(x(0)) \leq \frac{V(x(0))^{1-\alpha}}{c(1-\alpha)}$.
\end{Lemma}

\noindent Under the notion of FTS, the settling time $T$ depends upon the initial condition $x(0)$ and grows unbounded as $\|x(0)\|$ increases. The notion of FxTS allows the settling time to remain upper bounded, independent of the initial condition.

\begin{Definition}[\hspace{-0.1pt}\cite{polyakov2012nonlinear}]
The origin is said to be an FxTS equilibrium of \eqref{ex sys} if it is globally FTS and the settling time $T(x(0))$ is uniformly bounded, i.e., there exists $\bar T<\infty$ such that $\sup_{x(0)\in \mathbb R^n}T(x(0))\leq \bar T$. 
\end{Definition}

\begin{Lemma}[\hspace{-0.1pt}\cite{polyakov2012nonlinear}]\label{FxTS TH}
Suppose there exist a positive definite function $V\in C^1(\mathcal D,\mathbb R)$, where $\mathcal D\subset\mathbb R^n$ is a neighborhood of the origin, for system \eqref{ex sys} such that 
\begin{equation}\label{dot V eq}
    \dot V(x) \leq -pV(x)^{\alpha}-qV(x)^{\beta}, \; \forall x\in \mathcal D\setminus\{0\},
\end{equation}
with $p,q>0$, $0<\alpha<1$ and $\beta>1$. Then, the origin of \eqref{ex sys} is FxTS with settling time 
\begin{equation}
    T \leq \frac{1}{p(1-\alpha)} + \frac{1}{q(\beta-1)}. 
\end{equation}
\end{Lemma}

Next, various notions of convexity, and first and second order conditions for convexity are summarized in the following lemma (see \cite[Chapter 3]{boyd2004convex} for more details).

\begin{Definition}
A function $f\in C^1(D,\mathbb R)$, where $D\subset\mathbb R^n$ is a convex set, is called
\begin{itemize}
    \item[-] \textbf{Convex} if for all $x,y\in D$ and all $\alpha\in (0,1)$, $ f(\alpha x+(1-\alpha)y)\leq \alpha f(x)+(1-\alpha)f(y)$;
     \item[-] \textbf{Concave} if $(-f)$ is convex;
     \item[-] \textbf{Strictly convex}  if for all $x,y\in D$ and all $\alpha\in (0,1)$, $
     f(\alpha x+(1-\alpha)y)< \alpha f(x)+(1-\alpha)f(y)$;
    \item[-] \textbf{$m$-Strongly convex} if there exists $m>0$ such that $f(y)\geq f(x) +\nabla f(x)^T(y-x)+\frac{m}{2}\|x-y\|^2$, for all $x,y\in D$;
    \item[-] \textbf{$\beta$-Strongly smooth} if for all $x,y\in D$, $f(y)\leq f(x) +\nabla f(x)^T(y-x)+\frac{\beta}{2}\|x-y\|^2$, where $\beta>0$.
\end{itemize}
\end{Definition}

\begin{Lemma}\label{Lemma convexity}
\textbf{First-order conditions}: A function $f\in C^1(D,\mathbb R)$, where $D\subset\mathbb R^n$ is a convex set, is
\begin{itemize}
    \item[-] \textbf{Convex} if and only if for all $x,y\in D$, $f(y) \geq f(x) + \nabla f(x)^T(y-x)$;
    \item[-] \textbf{$m$-Strongly convex} if and only if for all $x,y\in D$, $(\nabla f(x)-\nabla f(y))^T(x-y)\geq m\|x-y\|^2$ for some $m>0$.
\end{itemize}
\textbf{Second-order conditions}: A function $f\in C^2(D,\mathbb R)$, $D\subset\mathbb R^n$ is
\begin{itemize}
    \item[-] \textbf{Convex} if and only if for all $x\in D$, $\nabla^2 f(x) \succeq 0$;
     \item[-] \textbf{Strictly convex}  if for all $x\in D$, $\nabla^2 f(x) \succ 0$;
    \item[-] \textbf{$m$-Strongly convex} if and only if for all $x\in D$, $\nabla^2 f(x) \succeq mI$ for some $m>0$.
    \item[-] \textbf{$\beta$-Strongly smooth} if and only if for all $x\in D$, $\nabla^2 f(x) \preceq\beta I$ for some $\beta>0$.
\end{itemize}
\end{Lemma}

\noindent It follows that strong-convexity implies strict-convexity, which implies convexity. 

\begin{Definition}
A function $F:D_1\times D_2\rightarrow \mathbb R$, where $D_1\subset\mathbb R^n, D_2\subset\mathbb R^m$, is called locally convex-concave (respectively, locally strongly or locally strictly convex-concave) if for any fixed $\bar z\in U_z\subset D_2$, $F(x,\bar z)$ is convex (respectively, strongly or strictly convex) for all $x\in U_x\subset D_1$, and for any fixed $\bar x\in U_x\subset  D_1$, $F(\bar x, z)$ is concave (respectively, strongly or strictly concave) for all $z\in U_z\subset D_2$. 
\end{Definition}

\section{FxTS in Optimization}\label{FT opt}
In this section, novel gradient flow schemes are proposed for unconstrained and constrained convex optimization problems. First, unconstrained optimization problems are considered.

\subsection{Unconstrained optimization: FTS scheme}
Consider the unconstrained minimization problem
\begin{equation}\label{op 1}
    \min_{x\in \mathbb R^n} f(x),
\end{equation}
where $f:\mathbb R^n\rightarrow\mathbb R$. The following assumption is made about the problem \eqref{op 1}. 

\begin{Assumption}\label{assum exist xst}
The minimum value of $f(x)$ is attained, i.e., there exists $x^\star\in \mathbb R^n$ such that $-\infty<f^\star = f(x^\star)$. 
\end{Assumption}

\begin{Remark}
For \eqref{op 1}, Assumption \ref{assum exist xst} is a necessary condition for convergence of gradient-based methods to an optimal solution. Coercivity, or equivalently, compactness of the sub-level sets of the convex function $f$ is a sufficient condition to guarantee existence of a minimizer \cite[Chapter 2]{beck2014introduction}.
\end{Remark}

\begin{Lemma}[\hspace{-0.1pt}\cite{boyd2004convex}] \label{optim fixed point}
If $f$ is convex and differentiable, then a point $x^\star$ is the global optimal point of the function $f$ if and only if $\nabla f(x^\star) = 0$. Furthermore, if $f$ is strictly convex, then the optimal point $x^\star$ is unique. 
\end{Lemma}

There has been a lot of research on developing discrete-time optimization schemes with convergence rate faster than linear (see \cite{wibisono2016variational,nesterov2008accelerating} and references therein). The continuous variant of such discrete-time schemes are also studied by various authors. In \cite{wibisono2016variational}, the authors discuss the following scheme
\begin{equation}\label{p flow}
    \dot x = -\frac{\nabla f(x)}{\|\nabla f(x)\|^{\frac{p-2}{p-1}}},
\end{equation}
where $p>2$ as a modification of GF. It is shown that the convergence rate for the solutions of \eqref{p flow} is given as
\begin{equation}\label{p conv}
    f(x(t))-f^{\star} \leq O\left(\frac{1}{t^{p-1}}\right),
\end{equation}
under the assumption that the level-sets of $f(x)$ are bounded. The flow in \eqref{p flow} is referred to as \textit{rescaled} GF in the subsequent text. In this subsection, it is shown that the optimal point of \eqref{op 1} is actually an FTS equilibrium of \eqref{p flow}. Then, in the subsequent subsections, modifications of \eqref{p flow} are presented with fixed-time convergence guarantees. 

\begin{Theorem}\label{p flow FT}
If $f\in C^2(\mathbb R^n, \mathbb R)$ is $k$-strongly convex for some $k>0$, then the trajectories of \eqref{p flow} converge to the optimal point $x^\star$ in finite time $T  = T(x(0))$, for any $p>2$.
\end{Theorem}
\begin{proof}
First, using Lemma \ref{Lemma convexity}, one has that strong convexity of $f$ implies that the optimal solution $x^\star$ of \eqref{op 1} is unique and satisfies $\nabla f(x^\star) = 0$. Choose $V(x) = \frac{1}{2}\|\nabla f(x)\|^2$ as the candidate Lyapunov function. $k$-strong convexity of $f$ implies that $\nabla^2f(x)\succeq kI$ for all $x\in \mathbb R^n$. Using this, one obtains that the time derivative of $V$ along \eqref{p flow} satisfies
\begin{align*}
    \dot V & = \nabla f^T\nabla^2f\dot x = -\nabla f^T\nabla^2f\frac{\nabla f}{\|\nabla f\|^\frac{p-2}{p-1}}\\
    & \leq -k \nabla f^T\frac{\nabla f}{\|\nabla f\|^\frac{p-2}{p-1}} = -k\|\nabla f\|^{2-\frac{p-2}{p-1}}  \\
    & = -k\|\nabla f\|^{\frac{p}{p-1}}  =  -k2^{\frac{p}{2(p-1)}}V^{\frac{p}{2(p-1)}}.
\end{align*}
Define $k_1 = k2^{\frac{p}{2(p-1)}}>0$ and $\beta_1 = \frac{p}{2(p-1)}$, so that $0<\beta_1<1$, and one obtains $ \dot V \leq -k_1V^{\beta_1}$. Using Lemma \ref{FTS Bhat}, one obtains that $\|\nabla f(x(t))\| = 0$ for all $t\geq T$ where $T\leq \frac{V(x(0))^{1-\beta_1}}{k_1(1-\beta_1)}$. Since the function is strongly convex, the sublevel sets of the norm of $\nabla f$ are bounded. Thus, $V$ is radially unbounded, and hence, the result holds for all $x(0)\in \mathbb R^n$. 
\end{proof} 

\begin{Remark}
For a given $p>2$, denote the bound on the time of convergence as $\bar T_p$. As noted in \cite{wibisono2016variational}, the limiting case of \eqref{p flow} as $p\rightarrow \infty$, called normalized GF, is studied in \cite{cortes2006finite}, and it is shown that the time of convergence is upper bounded by $\frac{1}{k}\|\nabla f(x(0))\|$ under the assumption of strong-convexity. The same bound on the time of convergence is recovered by allowing $p\rightarrow \infty$ in the bound of the settling time $\bar T_{\infty}$ in Theorem \ref{p flow FT}. 
Note that for initial values farther away from the optimal point, the upper bound on the time of convergence satisfy $\bar T_p\leq \bar T_{\infty}$, i.e., the upper bound for $2<p<\infty$ is lower than $\frac{1}{k}\|\nabla f(x(0)\|$.
\end{Remark}

It is clear from the expression of the bound on the settling time $T$ in Theorem \ref{p flow FT} that it grows unbounded as the distance of $x(0)$ increases from the optimal point $x^\star$. Inspired from \eqref{p flow} and noting its finite-time convergence guarantees, a modified GF is designed in this subsection with fixed-time convergence guarantees, so that the the optimal point of \eqref{op 1} can be obtained within a fixed time for any $x(0)\in \mathbb R^n$. 

\subsection{Unconstrained optimization: FxTS-GF scheme}
Consider the flow equation
\begin{equation}\label{fixed flow}
    \dot x = -c_1\frac{\nabla f(x)}{\|\nabla f(x)\|^\frac{p_1-2}{p_1-1}}-c_2\frac{\nabla f(x)}{\|\nabla f(x)\|^\frac{p_2-2}{p_2-1}},
\end{equation}
where $c_1,c_2>0$, $p_1>2$ and $1<p_2<2$. In what follows, \eqref{fixed flow} is referred to as \textit{FxTS-GF}. First, it is shown that the equilibrium points of the right-hand side of \eqref{fixed flow} are critical points\footnote{Recall that a point $x$ is called a critical point of a $C^1$ function $f$ if $\nabla f(x) =0$.} of the function $f$, and that the dynamics in \eqref{fixed flow} is continuous for all $x\in \mathbb R^n$.\footnote{Note that the dynamics in \eqref{fixed flow} is defined as $\dot x = 0$ when $\nabla f(x) = 0$.}

\begin{Lemma}\label{eq point fixed point}
A point $\bar x\in \mathbb R^n$ is an equilibrium point of \eqref{fixed flow} if and only if $\|\nabla f(\bar x)\| = 0$.  
\end{Lemma}

\begin{Lemma}\label{cont fixed flow}
If $f\in C^{1,1}_{loc}(\mathbb R^n, \mathbb R)$, then the right-hand side of \eqref{fixed flow} is continuous for all $x\in \mathbb R^n$. 
\end{Lemma}

\noindent Proofs of Lemma \ref{eq point fixed point} and Lemma \ref{cont fixed flow} are given in Appendix \ref{app proof Lemma eq fixed point}, \ref{app proof Lemma cont}, respectively. Next, it is shown that the solutions of \eqref{fixed flow} exist and are unique in forward time.

\begin{Proposition}\label{prop exist unique}
If the function $f\in C^{1,1}_{loc}(\mathbb R^n, \mathbb R)$ is convex with unique minimizer, then for any $x(0)\in \mathbb R^n$, the solution of \eqref{fixed flow} exists and is unique for all $t\geq 0$. 
\end{Proposition}

\begin{proof}
The result \cite[Proposition 2]{garg2019fixedMP} shows existence and uniqueness of solutions for an ODE of the form \eqref{fixed flow} with $X = \nabla f$. Per the statement of the Theorem, $f\in C^{1,1}_{loc}$, which implies that $\nabla f$ is locally Lipschitz. Also, with $0<\alpha_1 = 1-\frac{p_1-2}{p_1-1}<1$ and $\alpha_2 =1- \frac{p_2-2}{p_2-1}>1$, all the conditions of \cite[Proposition 2]{garg2019fixedMP} are satisfied. Hence, the solution of \eqref{fixed flow} exists in forward time and is unique, for any initial condition $x(0)\in \mathbb R^n$. \end{proof}

\noindent Before presenting the main result of this subsection, the following required assumptions on the objective function $f$ is made. 

\begin{Assumption}{(\textbf{Gradient dominated})} \label{f assum 2}
The function $f\in C^{1,1}_{loc}(\mathbb R^n,\mathbb R)$ has a unique minimizer $x = x^\star$ and satisfies the Polyak-\L ojasiewicz (PL) inequality, or is gradient dominated, with $\mu_f>0$, i.e., for all $x \in \mathbb R^n$, 
\begin{equation}\label{PL ineq}
    \frac{1}{2}\|\nabla f(x)\|^2\geq \mu_f(f(x)-f^{\star}).
\end{equation}
\end{Assumption}

\begin{Remark}\label{PL remark}
Strong convexity of the objective function is a  standard assumption used in literature to show exponential convergence for gradient flows. As noted in \cite{karimi2016linear}, PL inequality is the weakest condition among other similar conditions popularly used in the literature to show linear convergence in discrete-time (exponential, in continuous-time) gradient-based algorithms. Particularly, a strongly convex function $f\in C^1(\mathbb R^n, \mathbb R)$ satisfies PL inequality. {Note that under this assumption, it is not required for the objective function $f$ to be convex.}
\end{Remark}

\noindent It is shown in \cite[Theorem 2]{karimi2016linear} that satisfaction of PL inequality implies that the function $f$ has \textit{quadratic growth}, i.e., 
\begin{align}\label{eq: QG ineq}
    f(x)-f^\star\geq \frac{\mu_f}{2}\|x-x^\star\|^2.
\end{align}
for all $x$, where $\mu_f$ is as defined in \eqref{PL ineq}. The following result can now be stated. 

\begin{Theorem}\label{Fixed time GF}
If the objective function $f$ satisfies 
Assumptions \ref{assum exist xst} and \ref{f assum 2}, then, the trajectories of \eqref{fixed flow} converge to the optimal point $x^\star$ in fixed time for all $x(0)$.
\end{Theorem}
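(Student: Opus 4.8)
The plan is to mirror the two-case argument used in Theorems 2 and 3, but now to verify the \emph{two-term} differential inequality required by Theorem 2 (with $\kappa=1$), namely $\dot V \leq -(aV^{\alpha}+bV^{\beta})$ with $a,b>0$, $\alpha<1$ and $\beta>1$. The key structural observation is that the flow \eqref{fixed flow} is a sum of two Modified-GF terms \eqref{p flow}, one with $p_1>2$ and one with $1<p_2<2$, and that plugging each term individually into the Lyapunov derivative reproduces the single-exponent estimate from the earlier proofs. The term with $p_1>2$ will supply the low-exponent ($\alpha<1$) contribution that forces fixed-time behaviour near $x^\star$, while the term with $1<p_2<2$ will supply the high-exponent ($\beta>1$) contribution that bounds the escape time from large initial values; it is exactly this pairing that removes the dependence on $x(0)$.

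For the case of Assumption \ref{f assum 1}, I would take $V_1=\tfrac12\|\nabla f\|^2$ and differentiate along \eqref{fixed flow}. Using $\nabla f^T\nabla^2 f\,\nabla f\geq k\|\nabla f\|^2$ on each of the two terms and the algebraic identity $2-\tfrac{p-2}{p-1}=\tfrac{p}{p-1}$, I obtain $\dot V_1\leq -c_1 k\,\|\nabla f\|^{\frac{p_1}{p_1-1}}-c_2 k\,\|\nabla f\|^{\frac{p_2}{p_2-1}}$, and then substitute $\|\nabla f\|=(2V_1)^{1/2}$ to reach $\dot V_1\leq -c_1k\,(2V_1)^{\frac{p_1}{2(p_1-1)}}-c_2k\,(2V_1)^{\frac{p_2}{2(p_2-1)}}$. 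This has the exact form demanded by Theorem 2 with $\alpha=\tfrac{p_1}{2(p_1-1)}$ and $\beta=\tfrac{p_2}{2(p_2-1)}$, and I would then check the exponent ranges: for $p_1>2$ one has $\alpha\in(\tfrac12,1)$, so $\alpha<1$, and for $1<p_2<2$ one has $\beta>1$, so the hypotheses hold and fixed-time convergence $T_3<\infty$ follows with the explicit bound from Theorem 2.

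For the case of Assumption \ref{f assum 2}, I would take $V_2=\tfrac12(f(x)-f^\star)^2$, differentiate, and collect $\dot V_2=-(f-f^\star)\big(c_1\|\nabla f\|^{\frac{p_1}{p_1-1}}+c_2\|\nabla f\|^{\frac{p_2}{p_2-1}}\big)$. Applying the PL inequality \eqref{PL ineq} in the form $\|\nabla f\|\geq(2\mu_f(f-f^\star))^{1/2}$ to each term, and writing $f-f^\star=(2V_2)^{1/2}$, yields $\dot V_2\leq -a_2 V_2^{\frac12+\frac{p_1}{4(p_1-1)}}-b_2 V_2^{\frac12+\frac{p_2}{4(p_2-1)}}$ for suitable $a_2,b_2>0$. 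Here the relevant exponents are $\tfrac12+\tfrac{p_1}{4(p_1-1)}$ and $\tfrac12+\tfrac{p_2}{4(p_2-1)}$; for $p_1>2$ the first lies in $(\tfrac34,1)$ and for $1<p_2<2$ the second exceeds $1$, so again Theorem 2 applies and gives the fixed settling time $T_3$.

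The main obstacle is not the calculation but confirming that the chosen powers of $p_1$ and $p_2$ land on the correct sides of $1$ in \emph{both} Lyapunov functions simultaneously; this is precisely what pins down the admissible ranges $p_1>2$, $1<p_2<2$, and I would verify the monotonicity of $p\mapsto\tfrac{p}{2(p-1)}$ to make these endpoint checks rigorous. A secondary point I would address briefly is well-posedness at the equilibrium: since the net magnitude of the two terms scales like $\|\nabla f\|^{1/(p_1-1)}$ and $\|\nabla f\|^{1/(p_2-1)}$, both with positive exponents, the right-hand side of \eqref{fixed flow} is continuous and vanishes at $x^\star$, so $x^\star$ is genuinely an equilibrium and the Lyapunov estimate is valid up to the settling time despite the non-Lipschitz character of the field there.
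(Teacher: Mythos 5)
Your proposal is correct and follows essentially the same route as the paper's own proof: the same two Lyapunov functions $\tfrac12\|\nabla f\|^2$ and $\tfrac12(f-f^\star)^2$, the same identity $2-\tfrac{p-2}{p-1}=\tfrac{p}{p-1}$, and the same exponent checks placing the $p_1$-term below $1$ and the $p_2$-term above $1$ before invoking the fixed-time Lyapunov theorem. The only addition is your closing remark on continuity of the vector field at $x^\star$, which the paper omits but which is a harmless (and welcome) extra observation.
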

\begin{proof}
First, note that the solutions of \eqref{fixed flow} exist and are unique for all $x(0) \in \mathbb R^n\setminus \{x^\star\}$ \cite[Proposition 2]{garg2019fixedMP}.
{Note that the convexity assumption in \cite[Proposition 2]{garg2019fixedMP}, and consequently, in Proposition \ref{prop exist unique}, is sufficient to show uniqueness of the solutions starting from the minimizer, i.e., for the case when $x(0) = x^\star$. For the case when the function $f$ is not convex, the uniqueness of the solution can still be guaranteed if the equilibrium point is attractive. Assume that for \eqref{fixed flow} with a non-convex $f$, there exist two solutions, namely the trivial solution $x_1(\cdot) = x^\star$, and another solution such that $x_2(\cdot)\neq x^\star$, with $x_1(0) = x_2(0) = x^\star$. Define $V_1 = f(x_1(t))-f(x_2(t)) = f(x_1(t))-f^\star$ as a candidate Lyapunov function. Note that since $x_1(t) \neq x^\star$, $f(x_1(t))>f^\star$, and hence, $V_1$ is a positive definite function. The time derivative of $V_1$ reads $\dot V_1 = \nabla f(x_1(t))^T\dot x_1(t)$, which along the trajectories of \eqref{fixed flow} satisfies $\dot V_1 \leq 0$. Since $V_1(0)= f(x_1(0))-f(x_2(0)) = 0$, and $\dot V_1\leq 0$, using \cite[Theorem 3.15.1]{agarwal1993uniqueness}, one obtains that $x_1(t) = x_2(t) = x^\star$, i.e., the solution of \eqref{fixed flow} is unique for $x(0)= x^\star$.}

Now, consider the candidate Lyapunov function $V(x) = \frac{1}{2}(f(x)-f^{\star})^2$. From \eqref{eq: QG ineq}, it is clear that $V$ is radially unbounded. The time derivative along the trajectories of \eqref{fixed flow} reads
\begin{equation*}\begin{split}
    \dot V & = (f-f^{\star})(\nabla f)^T\left( -c_1\frac{\nabla f}{\|\nabla f\|^\frac{p_1-2}{p_1-1}}-c_2\frac{\nabla f}{\|\nabla f\|^\frac{p_2-2}{p_2-1}}\right)\\
    & = -c_1 (f-f^{\star})\|\nabla f\|^{2-\frac{p_1-2}{p_1-1}}-c_2 (f-f^{\star})\|\nabla f\|^{2-\frac{p_2-2}{p_2-1}}\\
    & = -c_1(f-f^{\star})\|\nabla f\|^{\alpha_1}-c_2(f-f^{\star})\|\nabla f\|^{\alpha_2}\\
    & \overset{\eqref{PL ineq}}{\leq} -c_1(2\mu_f)^{\frac{\alpha_1}{2}}(f-f^{\star})^{1+\frac{\alpha_1}{2}} -c_2(2\mu_f)^{\frac{\alpha_2}{2}}(f-f^{\star})^{1+\frac{\alpha_2}{2}}\\
    & = -c_12^{\frac{2+3\alpha_1}{4}}\mu_f^{\frac{\alpha_1}{2}}V^{\frac{2+\alpha_1}{4}}-c_22^{\frac{2+3\alpha_2}{4}}\mu_f^{\frac{\alpha_2}{2}}V^{\frac{2+\alpha_2}{4}} \\
    & =  -k_1V^{\frac{2+\alpha_1}{4}}-k_2V^{\frac{2+\alpha_2}{4}},
\end{split}\end{equation*}
where $\alpha_1 = 2-\frac{p_1-2}{p_1-1},\alpha_2 = 2-\frac{p_2-2}{p_2-1}, k_1 =c_12^{\frac{2+3\alpha_1}{4}}\mu_f^{\frac{\alpha_1}{2}}$ and $k_2 = c_22^{\frac{2+3\alpha_2}{4}}\mu_f^{\frac{\alpha_2}{2}}$. Since $\alpha_1<2$ and $\alpha_2>2$, one has $\frac{2+\alpha_1}{4}<1$ and $\frac{2+\alpha_2}{4}>1$. Hence, from Lemma \ref{FxTS TH}, one obtains that for $t\geq T_1$, $f(x(t)) = f^\star$, which is equivalent to $x(t) = x^\star$ under Assumption \ref{f assum 2}, where $T_1 \leq \frac{4}{k_1(2-\alpha_1)}+\frac{4}{k_2(\alpha_2-2)}$. Hence, the trajectories of \eqref{fixed flow} converge to the optimal point $x^\star$ of \eqref{op 1} in fixed time. 
\end{proof}

\begin{Remark}
Note that the difference between the proposed modified gradient flow \eqref{fixed flow} and the rescaled gradient flow \eqref{p flow} is the second term with exponent $1<p_2<2$. This term results into the second term $-V^\beta$ in \eqref{dot V eq}, while the first term, with exponent $p_1>2$, results into the first term $-V^\alpha$ in \eqref{dot V eq}. Intuitively, compared to the exponential convergence condition $\dot V\leq -V$, terms $V^\beta$ and $V^\alpha$ dominate the term $V$ when $V$ is large and small, respectively, resulting into accelerated convergence for both small and large initial distance from the equilibrium point. Since \eqref{p flow} contains only the first term, which dominates when $V$ is small, the time of convergence, though finite, grows larger as the initial distance from the equilibrium increases.
\end{Remark}

It is showed that the FxTS-GF in \eqref{fixed flow} can be used to find the optimal solution of \eqref{op 1} in fixed time. As mentioned in Remark \ref{PL remark}, Assumption \ref{f assum 2} is a relaxation used to show exponential convergence of gradient flow. Hence, all such problems which have been shown to have exponential convergence under strong-convexity can be solved within fixed time using \eqref{fixed flow}. It is easy to show that if a function $f:\mathbb R^m\rightarrow \mathbb R$ is strongly convex, then the function $g:\mathbb R^n\rightarrow \mathbb R$, defined as $g(x) = f(Ax)$, $A\in \mathbb R^{n\times m}$, is strongly convex if $A$ is full row-rank. If matrix $A$ is not full row-rank, then $g$ may not be strongly convex. On the other hand, as shown in \cite[Appendix 2.3]{karimi2016linear}, $g$ still satisfies PL inequality for any matrix $A$. Below, an example of an important class of problems is given for which, the objective function satisfies PL inequality (see \cite{karimi2016linear} for more examples on useful functions that satisfy PL inequality).

\begin{Example}\label{Example: PL inequality}
\textbf{Least squares}:  Consider the optimization problem 
\begin{equation}\label{PL example 1}
    \min_{x\in \mathbb R^n} f(Ax) = \|Ax-b\|^2,
\end{equation}
where $x\in \mathbb R^n, A\in \mathbb R^{n\times n}$ and $b\in \mathbb R^n$. Here, the function $f(x) = \|x-b\|^2$ is strongly-convex, and hence, $g(x) = \|Ax-b\|^2$ satisfies PL inequality for any matrix $A$.
\noindent \textbf{Linear regression:} Consider the optimization problem 
\begin{equation}\label{PL example 2}
    \min_{x\in \mathbb R^n} f(Ax) = \sum_{i = 1}^m\log(1+b_ia_i^Tx),
\end{equation}
where $x\in \mathbb R^n, a_i\in \mathbb R^{n}$ and $b\in \mathbb R$ for $i = 1,2, \dots, m$. Here, the function $g(x) = f(Ax)$ satisfies PL inequality for any matrix $A$.
\end{Example}

The objective functions in \eqref{PL example 1} and \eqref{PL example 2}
satisfy PL inequality, but need not be strongly convex for any matrix $A$; if additionally, uniqueness of the optimal solutions of \eqref{PL example 1}  and \eqref{PL example 2} 
is assumed, one can use \eqref{fixed flow} to find the optimal solutions for \eqref{PL example 1}  and \eqref{PL example 2}, respectively, 
in fixed time. These are important classes of functions in machine learning problems. Next, the modification of the Newton's method based GF is presented to guarantee FxTS for a class of functions that do not satisfy Assumption \ref{f assum 2}.

\subsection{FxTS Newton's method}

In this subsection, a modified Newton's method is proposed that guarantees fixed-time convergence to the optimal point. The \textit{nominal} Newton's method is defined as
\begin{equation}\label{NM orig}
    \dot x = -\left(\nabla^2 f(x)\right)^{-1} \nabla f(x).
\end{equation}
It is well-known that under certain conditions on the function $f$, \eqref{NM orig} can achieve exponential convergence. The following assumption is made about the objective function $f$. 

\begin{Assumption}\label{assum inv f}
The function $f\in C^2(\mathbb R^n,\mathbb R)$ is strictly convex.
\end{Assumption}

Per Assumption \ref{assum inv f}, $\nabla^2f \succ 0$, which implies that the Hessian is invertible\footnote{This is needed so that the right-hand side in the Newton's method is well-defined.}, and with Assumption \ref{assum exist xst}, using Lemma \ref{optim fixed point}, one has that the optimal point $x^\star$ is unique. Note that if $f$ satisfies Assumption \ref{assum inv f}, it is not necessary that it satisfies Assumption \ref{f assum 2}. So, for \eqref{op 1} with this class of functions, fixed-time convergence cannot be guaranteed using \eqref{fixed flow}. Hence, another modified GF is proposed so that fixed-time convergence for this class of functions can be guaranteed. Consider the flow equation for FxTS Newton's method
\begin{equation}\label{FT Newton}
    \dot x = -(\nabla^2 f(x))^{-1}\left(c_1\frac{\nabla f(x)}{\|\nabla f(x)\|^\frac{p_1-2}{p_1-1}}+c_2\frac{\nabla f(x)}{\|\nabla f(x)\|^\frac{p_2-2}{p_2-1}}\right),
\end{equation}
where $c_1, c_2>0$, $p_1>2$ and $1<p_2<2$. The following result can now be stated. 

\begin{Theorem}\label{NT FT}
If $f$ satisfies Assumptions \ref{assum exist xst} and \ref{assum inv f}, then the trajectories of \eqref{FT Newton} converge to the optimal point $x^\star$ in fixed time $T_{NM}$ 
for any initial condition $x(0)\in \mathbb R^n$. 
\end{Theorem}
\begin{proof}
Note that under Assumption \ref{assum inv f}, per Proposition \ref{prop exist unique}, solutions of \eqref{FT Newton} exist and are unique for all $x(0) \in \mathbb R^n$. Consider the Lyapunov function $V(x) = \frac{1}{2}\|\nabla f(x)\|^2$. Since the sub-level sets of norm of its gradient $\nabla f$ are bounded for a strictly convex function under Assumption \ref{assum exist xst} (\cite[Corollary 8.7.1]{rockafellar2015convex}), the candidate Lyapunov function $V$ is radially unbounded.  The time derivative of this function along the trajectories of \eqref{FT Newton} reads
\begin{equation*}\begin{split}
    \dot V & = (\nabla f)^T(\nabla^2 f) \dot x\\
    & = -(\nabla f)^T\left(c_1\frac{\nabla f}{\|\nabla f\|^\frac{p_1-2}{p_1-1}} + c_2\frac{\nabla f}{\|\nabla f\|^\frac{p_2-2}{p_2-1}}\right)\\
    & = -c_1\|\nabla f\|^{2-\frac{p_1-2}{p_1-1}}-c_2\|\nabla f\|^{2-\frac{p_2-2}{p_2-1}} \\
    & \leq  -c_12^{\frac{\alpha_1}{2}} V^{\frac{\alpha_1}{2}}-c_22^{\frac{\alpha_2}{2}} V^{\frac{\alpha_2}{2}},
\end{split}\end{equation*}
where $\alpha_1 = 2-\frac{p_1-2}{p_1-1}$ and $\alpha_2 = 2-\frac{p_2-2}{p_2-1}$. Since $p_1>2$ and $1<p_2<2$ one obtains that $1<\alpha_1<2$ and $\alpha_2>2$. Hence, using Lemma \ref{FxTS TH}, one obtains that the trajectories of \eqref{FT Newton} converge to the optimal point $x^\star$ in the fixed time $T_{NM}$ for all $x(0)\in \mathbb R^n$, where $T_{NM}\leq \frac{2^{1-\frac{\alpha_1}{2}}}{c_1(2-\alpha_1)} +  \frac{2^{1-\frac{\alpha_2}{2}}}{c_2(\alpha_2-2)}$. 
\end{proof}

While strongly-convex functions satisfy PL inequality \cite{karimi2016linear}, strictly convex functions do not satisfy PL inequality in general. So, for convex optimization problems with strictly convex objective functions, that do not satisfy Assumption \ref{f assum 2}, \eqref{FT Newton} can be used to find the optimal solution of \eqref{op 1} within a fixed time. One example is the class of \textit{quartic} functions, which can be used to reformulate standard QP with sign constraints as uncontrained optimization problem \cite{bomze2005quartic}.

\begin{Example}\label{Example: QP NT}
Consider the optimization problem 
\begin{equation}\label{QP example NT method}
\begin{split}
    \min_x & \; x^TQx + c^Tx, \\
    \textnormal{s.t.} & \; x_i\geq 0, \; i = 1, 2, \dots, n,
\end{split}
\end{equation}
where $x, c\in \mathbb R^n$ and $Q\in \mathbb R^{n\times n}$ is a positive definite matrix. Let $z \in \mathbb R^n$ be defined as $x_i = z_i^2$, to get rid of the sign contraints, and re-write \eqref{QP example NT method} in terms of $z$
\begin{equation}\label{quartic opt Z}
    \min_z \; z^TZQZz + c^TZz, \\
\end{equation}
where $Z\in \mathbb R^{n\times n}$ is a diagonal matrix consisting of elements $z_i$, i.e., 
\begin{equation*}
    Z_{ij} = \begin{cases}z_i, & i = j;\\
    0, & i\neq j,
    \end{cases}
\end{equation*}
for $i, j = 1, 2, \dots, n$. The optimal solution $\bar x$ of \eqref{QP example NT method} is given by $\bar x_i = \bar z^2_i$, where $\bar z$ is the optimal solution of \eqref{quartic opt Z}. 
 \end{Example}

\noindent It is clear that the objective function in \eqref{quartic opt Z} is a quartic, is not strongly convex and may not satisfy PL inequality. Nevertheless, it is strictly convex and hence, \eqref{FT Newton} can be used to find the optimal point of \eqref{quartic opt Z} within fixed time. 

Upto now, unconstrained minimization problems have been considered. Next, constrained minimization problems are studied, and FxTS-GF based methods are proposed with fixed-time convergence guarantees.

\subsection{Convex optimization with linear equality constraints}\label{Sec Eq Const}
Consider the optimization problem 
\begin{equation}\label{Equal cons}
\begin{split}
        \min_{x\in \mathbb R^n} &\; f(x), \\
        \textrm{s.t.} \; & Ax = b,
\end{split}
\end{equation}
where $f:\mathbb R^n\rightarrow \mathbb R$ is convex, $A\in \mathbb R^{m\times n}$ and $b\in \mathbb  R^m$. 

\begin{Assumption}\label{assum A full rank}
The matrix $A$ is full row-rank and the objective function $f$ is coercive.
\end{Assumption}

\begin{Remark}
Assumption \ref{assum A full rank} is commonly used in constrained optimization \cite{qu2019exponential}; the matrix $A$ being full row-rank guarantees that the  feasible set is non-empty and closed, and thus, coercivity of the convex function $f$ guarantees that the solution of \eqref{Equal cons} exists \cite[Chapter 2]{beck2014introduction}. 
\end{Remark}

\noindent Define $g: \mathbb R^m\rightarrow \mathbb R$ as
\begin{equation}\label{g func}
    g(\nu) = \inf_{x\in \mathbb R^n} (f(x) + \nu^T(Ax-b)),
\end{equation} 
so that the dual problem (see \cite[Chapter 5]{boyd2004convex}) for \eqref{Equal cons} is given by
\begin{equation}\label{max optim}
    \sup_{\nu\in \mathbb R^m} \; g(\nu).
\end{equation}
Using \eqref{g func}, rewrite \eqref{max optim} as
\begin{equation}\label{SP x nu}
     \sup_{\nu\in \mathbb R^m} \inf_{x\in \mathbb R^n} L(x,\nu) \triangleq f(x) + \nu^T(Ax-b).
\end{equation}
It is clear that \eqref{SP x nu} is a saddle-point problem, where the function $L(x,\nu)$ needs to be minimized over $x$ and maximized over $\nu$. Using \eqref{g func}, one obtains (\hspace{-0.1pt}\cite[Section 5.1]{boyd2004convex})
\begin{equation}\label{dual opt}
    g(\nu) = -\nu^Tb - f^*(-A^T\nu),
\end{equation}
where $f^*:\mathbb R^n \rightarrow\mathbb R$ is the conjugate of $f$.\footnote{Since the considered space is the finite-dimensional vector space $\mathbb R^n$ with the Euclidean norm, the dual space is $\mathbb R^n$ with the dual norm $\|\cdot\|_* = \|\cdot\|$.} Note that the function $f^*$ is always convex, whether $f$ is convex or not \cite[Chapter 3]{boyd2004convex}. It can be readily seen from \eqref{dual opt} that $g$ is a concave function (since $f^*$ is convex, $-f^*$ is concave).  As shown in \cite[Section 3.5]{zalinescu2002convex}, strong-convexity of function $f$ and strong-smoothness of its conjugate $f^*$ are equivalent. Using this, the following results can be stated.

\begin{Lemma}\label{smooth convex}
If $f$ is a convex, $\beta$-strongly smooth function, then the function $g$ defined as per \eqref{g func} is $\alpha$-strongly concave, for some $\alpha>0$.
\end{Lemma}
\noindent The proof is provided in Appendix \ref{app smooth convex}. Thus, the following assumption on $f$ is made. 

\begin{Assumption}\label{assum f star diff}
The objective function $f\in C^{1,1}_{loc}(\mathbb R^n, \mathbb R)$ is $\beta_1$-strongly convex, $\beta_2$-strongly smooth, its conjugate function $f^*$ is known in closed-form, and satisfies $f^*\in C^{1,1}_{loc}(\mathbb R^n, \mathbb R)$.
\end{Assumption}

\noindent See Remark \ref{Rem last} and Corollary \ref{Cor opt lin const FxTS} for the case when $f^*$ is not known in closed-form. Consider the dynamical system
\begin{equation}\label{y p1 p2}
    \dot \nu =  -c_1\frac{-\nabla g(\nu)}{\|\nabla g(\nu)\|^\frac{p_1-2}{p_1-1}}-c_2\frac{-\nabla g(\nu)}{\|\nabla h(\nu)\|^\frac{p_2-2}{p_2-1}},
\end{equation}
with $c_1, c_2>0$, $p_1>2$ and $1<p_2<2$. Note that the assumptions on functions $f, f^*$, and matrix $A$ implies $\sup\limits_\nu \inf\limits_x L(x,\nu) = \inf\limits_x \sup\limits_\nu L(x,\nu)$ (\hspace{-0.1pt}\cite[Section 5.5]{boyd2004convex}). Also, using Proposition \ref{prop exist unique}, one has that the solutions of \eqref{y p1 p2} exist and are unique for all $\nu(0)\in \mathbb R^m$. 

\begin{Lemma}\label{ascent lemma}
The trajectories of \eqref{y p1 p2} reach the optimal point $\nu^{\star}$ of \eqref{max optim} in fixed time $T_\nu$ for all initial conditions $\nu(0)\in \mathbb R^m$. 
\end{Lemma}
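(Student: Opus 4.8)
The plan is to observe that \eqref{y p1 p2} is nothing but the fixed-time gradient flow \eqref{fixed flow} applied to the convex function $h=-g$, and then to invoke Theorem \ref{Fixed time GF} directly. The only real work is to verify that $h$ satisfies one of Assumption \ref{f assum 1} or Assumption \ref{f assum 2}, so that the hypotheses of Theorem \ref{Fixed time GF} are met; the fixed-time bound $T_\nu$ then comes for free from that theorem, together with the fact that $\nu^\star$ is the (unique) maximizer of $g$, equivalently the minimizer of $h$.

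First I would extract the strong convexity of $f^*$ that is already implicit in Lemma \ref{smooth convex}: since $f$ is closed-convex and strongly smooth, the duality result of \cite{kakade2009duality} gives that $f^*$ is strongly convex, say with modulus $\mu>0$ (Lemma \ref{smooth convex} only records the weaker consequence of strict convexity, but the strong-convexity modulus is what I will use here). Next, using \eqref{dual opt} I would write $h(\nu)=-g(\nu)=\nu^Tb+f^*(-A^T\nu)$ and show that $h$ is strongly convex. The linear term $\nu^Tb$ contributes no curvature, while the composition $\nu\mapsto f^*(-A^T\nu)$ inherits strong convexity from $f^*$ through the linear map $-A^T$. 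Concretely, assuming $f^*\in C^2$, the chain rule yields $\nabla^2 h(\nu)=A\,\nabla^2 f^*(-A^T\nu)\,A^T\ge \mu\,AA^T \ge \mu\,\lambda_{\min}(AA^T)\,I=:kI$, where $\lambda_{\min}(AA^T)>0$ precisely because $A$ is full row-rank. Thus Assumption \ref{f assum 1} holds for $h$ with constant $k=\mu\,\lambda_{\min}(AA^T)$, and its unique minimizer is the maximizer $\nu^\star$ of $g$.

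With Assumption \ref{f assum 1} verified for $h$, Theorem \ref{Fixed time GF} (Case 1) applies verbatim to \eqref{y p1 p2}, so the trajectories reach $\nu^\star$ in a fixed time $T_\nu$ independent of $\nu(0)$, with $T_\nu$ given by the bound therein with $k$ as above. The main obstacle is the middle step, namely transferring the uniform curvature bound from $f^*$ to $h$, which hinges entirely on $A$ being full row-rank so that $AA^T$ is positive definite; without that assumption only a semidefinite bound survives and strong convexity of $h$ is lost. If one is unwilling to assume $C^2$ regularity of $f^*$, I would instead establish strong convexity of $h$ directly from the quadratic-lower-bound characterization of strong convexity and then use the standard fact that a strongly convex function satisfies the PL inequality \eqref{PL ineq}; in that case Assumption \ref{f assum 2} holds for $h$ and Theorem \ref{Fixed time GF} (Case 2) applies instead, giving the same fixed-time convergence conclusion.
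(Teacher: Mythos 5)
Your proposal is correct and reaches the conclusion by the same high-level strategy as the paper --- recognize \eqref{y p1 p2} as the fixed-time flow \eqref{fixed flow} applied to $h=-g$ and invoke Theorem \ref{Fixed time GF} --- but it verifies the hypotheses by a different and, in fact, sounder route. The paper's proof argues: $h$ is strictly convex by Lemma \ref{smooth convex}, \emph{therefore} $h$ satisfies the PL inequality \eqref{PL ineq}, and then applies the second case of Theorem \ref{Fixed time GF}. That intermediate implication is not valid in general (e.g.\ $x\mapsto x^4$ is strictly convex but fails \eqref{PL ineq} near its minimizer), so the paper's own argument has a small gap at exactly the point you chose to do the real work. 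You instead go back to the duality result of \cite{kakade2009duality} to extract the \emph{strong} convexity of $f^*$ (Lemma \ref{smooth convex} only records strict convexity), push the modulus through the linear map via $\nabla^2 h(\nu)=A\,\nabla^2 f^*(-A^T\nu)\,A^T\ge \mu\,\lambda_{\min}(AA^T)I$ using the full row-rank of $A$, and land in Assumption \ref{f assum 1} / Case 1 of Theorem \ref{Fixed time GF}; your fallback (strong convexity $\Rightarrow$ PL, hence Assumption \ref{f assum 2} / Case 2) is the correct repair of the paper's own chain when $C^2$ regularity of $f^*$ is not available. The payoff of your version is an explicit curvature constant $k=\mu\,\lambda_{\min}(AA^T)$ and hence an explicit bound on $T_\nu$, plus a clear identification of where full row-rank of $A$ is actually used; the paper's version is shorter but leans on an implication that only holds under the stronger (strong-convexity) hypothesis you make explicit.
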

\begin{proof}
Per Lemma \ref{smooth convex}, $-g(\nu)$ is $\alpha$-strongly convex. Thus, $g$ satisfies PL inequality \eqref{PL ineq} with some constant $\mu_g>0$. Since $g$ is strongly convex, and the maximizer $\nu^\star$ of $g$ exists, it is also unique. This implies that $g$ satisfies Assumptions \ref{assum exist xst} and \ref{f assum 2}. Hence, using Theorem \ref{Fixed time GF}, one obtains that the trajectories of \eqref{y p1 p2} reach the the maximizer $\nu^\star$ of $g(\nu)$ in a fixed time $T_\nu \leq \frac{4}{k_3(2-\alpha_1)}+\frac{4}{k_4(\alpha_2-2)} $ for all initial conditions $\nu(0)$, where $\alpha_1 = 2-\frac{p_1-2}{p_1-1}, \alpha_2 = 2-\frac{p_2-2}{p_2-1}, k_3 = c_12^{\frac{2+3\alpha_1}{4}}\mu_g^{\frac{\alpha_1}{2}}$ and $k_4 = c_22^{\frac{2+3\alpha_2}{4}}\mu_g^{\frac{\alpha_2}{2}}$. 
\end{proof}

Under the assumption of existence (Assumption \ref{assum A full rank}) and uniqueness (guaranteed by Assumption \ref{assum f star diff}) of the optimal point of \eqref{Equal cons} and using the fact that $-g(\nu)$ is $\alpha$-strongly convex, one obtains that the minimizer of $L(x,\nu^{\star})$ is the optimal solution of \eqref{Equal cons} \cite[Section 5.5.5]{boyd2004convex}. Using this, one obtains that 
\begin{equation}\label{lagr nu}
    x^\star = \textrm{arg}\min_{x\in \mathbb R^n} \; L(x,\nu^{\star})
\end{equation}
or, in other words, $x^\star$ satisfies $\nabla_x L(x^{\star}, \nu^{\star}) \triangleq \nabla f(x^{\star}) + \nu^{*T}A = 0$. Hence, the trajectories of the system 
\begin{equation}\label{x q1 q2}
    \dot x = -d_1\frac{\nabla_x L(x,\nu^{\star})}{\|\nabla_x L(x,\nu^{\star})\|^\frac{q_1-2}{q_1-1}}-d_2\frac{\nabla_x L(x,\nu^{\star})}{\|\nabla_x L(x,\nu^{\star})\|^\frac{q_2-2}{q_2-1}},
\end{equation}
with $d_1, d_2>0$, $q_1>2$ and $1<q_2<2$, converge to the optimizer of \eqref{Equal cons}. The following result can now be stated.

\begin{Theorem}
Let Assumptions \ref{assum A full rank} and \ref{assum f star diff} hold. Then, the optimal point $x^{\star}$ of \eqref{Equal cons} can be found in fixed time $T_{eq}$ by first solving \eqref{y p1 p2} for any $\nu(0)\in \mathbb R^m$, and then, solving \eqref{x q1 q2} for any $x(0)\in \mathbb R^n$, with
\begin{align*}
    T_{eq} \leq \frac{4}{k_3(2-\alpha_1)}+\frac{4}{k_4(\alpha_2-2)}+\frac{4}{k_5(2-\alpha_3)}+\frac{4}{k_6(\alpha_4-2)}, 
\end{align*}
where $k_i, \alpha_i$ are functions of $c_1, c_2, d_1, d_2, p_1, p_2, q_1, q_2$.
\end{Theorem}

\begin{proof}
From Lemma \ref{ascent lemma}, one obtains that the trajectories of \eqref{y p1 p2} reach the optimizer $\nu^{\star}$ of \eqref{max optim} in fixed time $T_\nu\leq \frac{4}{k_3(2-\alpha_1)}+\frac{4}{k_4(\alpha_2-2)}$. Now, since $f(x)$ is strongly convex, it follows that $L(\cdot,\nu)$ is strongly convex for each $\nu \in \mathbb R^m$, and in particular, $L(\cdot,\nu^{\star})$ is strongly convex, and hence, also satisfies PL inequality for some constant $\mu_L>0$. Furthermore, it can be easily shown that $L(\cdot, \nu^\star)$ satisfies Assumptions \ref{assum exist xst} and \ref{f assum 2}. Therefore, from Theorem \ref{Fixed time GF}, one has that there exists a fixed time $T_x$ such that the trajectories of \eqref{x q1 q2} reach the optimal point of \eqref{lagr nu} in $T_x \leq \frac{4}{k_5(2-\alpha_3)}+\frac{4}{k_6(\alpha_4-2)}$ for all initial conditions $x(0)$, where $\alpha_3 = 2-\frac{q_1-2}{q_1-1}, \alpha_4 = 2-\frac{q_2-2}{q_2-1}, k_5 = d_12^{\frac{2+3\alpha_3}{4}}\mu_L^{\frac{\alpha_1}{2}}$ and $k_6 = d_22^{\frac{2+3\alpha_4}{4}}\mu_L^{\frac{\alpha_4}{2}}$. Hence, one has that the optimal point of \eqref{Equal cons} can be obtained in fixed time $T_{eq} \leq T_x+T_\nu$, by first solving \eqref{y p1 p2} and then, \eqref{x q1 q2}.
\end{proof}
 
A very important class of optimization problems in machine learning and model predictive control (MPC) is the class of quadratic programs (QPs). In the following example, it is shown that QPs with equality constraints that satisfy Assumption \ref{assum f star diff} fit into the proposed framework. 
\begin{Example}\label{Example: QP dual}
Consider the following QP with equality constraints
\begin{equation}\label{opt quad lin cons ex}
    \begin{split}
        \min_{x\in \mathbb R^n} &\; \frac{1}{2}x^TQx+c^Tx, \\
        \textnormal{s.t.} \; &Ax = b,
    \end{split}
\end{equation}
where $Q\in \mathbb R^{n\times n}$ is positive definite and $A\in \mathbb R^{m\times n}$ has full row-rank. The function $g(\nu)$ for \eqref{opt quad lin cons ex} is given as
\begin{equation*}
\begin{split}
    g(\nu) &= \inf_{x\in \mathbb R^n}  \Big(\frac{1}{2}x^TQx+c^Tx+\nu^T(Ax-b)\Big)\\
    & = -\nu^Tb-\frac{1}{2}(c-A^T\nu)^TQ^{-1}(c-A^T\nu).
\end{split}
\end{equation*}
Hence, one has that $f^*(-A^T\nu) = -\frac{1}{2}(c-A^T\nu)^TQ^{-1}(c-A^T\nu)$. It can be readily verified that the functions $f, f^*$ satisfy Assumption \ref{assum f star diff}.  
\end{Example}

{
\begin{Remark}
The assumption on locally Lipschitz continuity of the gradient of $f$ or $f^*$ (in Assumptions \ref{f assum 2} and \ref{assum f star diff}) is sufficient for uniqueness of the solution of the concerned modified GF. This assumption can be further relaxed by using \cite[Theorem 3.15.11]{agarwal1993uniqueness}, where only continuity of the vector field is shown to be sufficient for existence and uniqueness of the solutions. Furthermore, using \cite[Theorem 3.5.10]{zalinescu2002convex}, one obtains that $f^*\in C^1$ if $f$ is strongly convex. Thus, Assumption \ref{assum f star diff} (as well as Assumption \ref{f assum 2}) can be relaxed by allowing $f\in C^1$ and imposing no additional assumptions on $f^*$.
\end{Remark}
}

\section{FxTS of Saddle-Point Dynamics}\label{FT SP}

In this section, min-max problems are considered that can be formulated as saddle-point dynamics, and a modification is studied so that optimal point, which is a saddle-point, can be found within a fixed time.  To this end, the general saddle-point problem of minimizing a function $F(x,z)$ over $x\in\mathbb R^n$ and maximizing over $z\in\mathbb R^m$ is considered, where $F:\mathbb R^n\times\mathbb R^m\rightarrow\mathbb R$. Formally, this can be stated as 
\begin{equation}\label{min max problem}
    \max_{z\in \mathbb R^m}\min_{x\in \mathbb R^n} F(x,z).
\end{equation}
A point $(x^\star,z^\star)$ is called as \textit{local saddle-point} of $F$ (as well as local optimal solution of \eqref{min max problem}), if there exist open neighborhoods $U_x\subset \mathbb R^n$ and $U_z\subset \mathbb R^m$ of $x^\star$ and $z^\star$, respectively, such that for all $(x,z)\in U_x\times U_z$, one has
\begin{equation}
    F(x^\star,z) \leq F(x^\star,z^\star) \leq F(x,z^\star).
\end{equation}
The point $(x^\star,z^\star)$ is \textit{global saddle-point} if $U_x = \mathbb R^n$ and $U_z = \mathbb R^m$. The following assumption is made.

\begin{Assumption}\label{F assum 1}
A saddle point $(x^\star, z^\star)$ exists that solves \eqref{min max problem}. Furthermore, the function $F\in C^2(\mathbb R^n\times\mathbb R^m, \mathbb R)$ is locally strictly convex-concave in an open neighborhood $U \subset \mathbb R^n\times \mathbb R^m$ of the saddle point $(x^\star,z^\star)$. More specifically, $\nabla_{xx} F(x,z) \succ 0$ and $\nabla_{zz} F(x,z)\prec 0$ for all $(x,z)\in U$.
\end{Assumption}

The local strong or strict convexity-concavity assumption is very commonly used in literature for showing asymptotic convergence of saddle-point dynamics to the optimal solution of \eqref{min max problem} (see, e.g., \cite{cherukuri2017saddle,cherukuri2017role}). Using this, the following result can be stated. 

\begin{Lemma}\label{M inv}
Let Assumption \ref{F assum 1} hold for some open neighborhood $U\subset\mathbb R^n\times\mathbb R^m$ of the saddle-point $(x^\star,z^\star)$. Then, the Hessian of $F$ given as
\begin{equation}
     \nabla^2 F(x,z) = \begin{bmatrix} \nabla_{xx} F(x,z) & \nabla_{xz} F(x,z)\\ \nabla_{zx} F(x,z) & \nabla_{zz} F(x,z)\end{bmatrix},
\end{equation}
is invertible for all $(x,z)\in U$.
\end{Lemma}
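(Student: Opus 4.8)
The plan is to exploit the $2\times 2$ block structure of $\nabla^2 F$ together with the Schur complement, reducing invertibility of the full Hessian to invertibility of a single definite block. Write $A = \nabla_{xx} F(x,z)$, $C = \nabla_{zz} F(x,z)$, and $B = \nabla_{zx} F(x,z)$; since $F\in C^2$, equality of mixed partials gives $\nabla_{xz} F = B^T$, so the Hessian is the symmetric block matrix $\begin{bmatrix} A & B \\ B^T & C \end{bmatrix}$. Assumption \ref{F assum 1} supplies exactly the sign information needed on the diagonal blocks: $A \succeq k_1 I \succ 0$ and $C \preceq -k_2 I \prec 0$ for all $(x,z)\in U$.

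First I would note that, because $A$ is positive definite, it is invertible, so the Schur complement $S = C - B^T A^{-1} B$ is well defined. The block matrix $\begin{bmatrix} A & B \\ B^T & C \end{bmatrix}$ admits the standard block-triangular factorization through $A$ and $S$, whose outer triangular factors have determinant one; hence the Hessian is invertible if and only if both $A$ and $S$ are invertible. Since invertibility of $A$ is immediate, everything reduces to showing that $S$ is nonsingular.

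The key step is a sign argument for $S$. Because $A \succ 0$, its inverse $A^{-1}$ is positive definite, so $B^T A^{-1} B \succeq 0$ is positive semidefinite. Subtracting a positive-semidefinite matrix from a negative-definite one preserves negative definiteness: $S = C - B^T A^{-1} B \preceq C \preceq -k_2 I \prec 0$. In particular $S$ is nonsingular, which completes the argument. Equivalently, and perhaps more transparently, I could bypass the factorization and argue directly on the kernel: if $\nabla^2 F\,(u,v)^T = 0$, then $Au + Bv = 0$ and $B^T u + Cv = 0$; solving the first relation for $u = -A^{-1} B v$ and substituting into the second yields $S v = 0$, which forces $v = 0$ by definiteness of $S$, and then $u = 0$.

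I do not anticipate a genuine obstacle here; the substance lies entirely in organizing the block computation and tracking the definiteness signs correctly. The one point requiring care is the direction of the inequality in the Schur-complement step: one must use that the \emph{negative}-definiteness of $C = \nabla_{zz} F$ is preserved, rather than destroyed, by subtracting the positive-semidefinite term $B^T A^{-1} B$. This is precisely why the convex--concave sign pattern of Assumption \ref{F assum 1}, and not merely nondegeneracy of the diagonal blocks, is what guarantees invertibility of $\nabla^2 F$ on $U$.
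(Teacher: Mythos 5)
Your proposal is correct and follows essentially the same route as the paper: both reduce invertibility of the block Hessian to the invertibility of $\nabla_{xx}F$ and of its Schur complement, and both use that subtracting the positive-semidefinite term $B^T A^{-1} B$ from the negative-definite block $\nabla_{zz}F$ preserves definiteness (the paper merely writes $C=-\nabla_{zz}F$ so as to phrase this as positive definiteness of $C+B^TA^{-1}B$, and invokes a rank-additivity identity where you use the block factorization or the kernel argument). No gaps.
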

\noindent The proof is provided in Appendix \ref{app proof M inv}. Authors in \cite{cherukuri2017saddle} use the following saddle-point dynamics
\begin{equation}\label{SP asym dyn}
    \dot x = -\nabla F_x(x,z), \quad
    \dot z  = \nabla F_z(x,z).
\end{equation}
and show asymptotic convergence to the saddle-point $(x^\star, z^\star)$ under Assumption \ref{F assum 1}. Next, the flow in \eqref{SP asym dyn} is modified so that fixed-time convergence can be guaranteed. The FxTS Newton's method is used to define the FxTS saddle-point (FxTS-SP) dynamics as{\small
\begin{equation}\label{SP dyn}
\begin{split}
    \begin{bmatrix}\dot x \\ \dot z\end{bmatrix} & = -(\nabla^2 F(x,z))^{-1}\left(c_1\frac{\nabla F(x,z)}{\|\nabla F(x,z)\|^\frac{p_1-2}{p_1-1}}\right.\\
    & \quad +\left.c_2\frac{\nabla F(x,z)}{\|\nabla F(x,z)\|^\frac{p_2-2}{p_2-1}}\right),
\end{split}
\end{equation}}\normalsize
where $c_1, c_2>0$, $p_1>2, 1<p_2<2$, and $\nabla F(x,z) \triangleq \begin{bmatrix}\nabla_x F(x,z)^T & \nabla_z F(x,z)^T\end{bmatrix}^T$. Note that per Lemma \ref{eq point fixed point}, the point $(x,z)$ is an equilibrium point of \eqref{SP dyn} if and only if it satisfies $\nabla F(x,z) = 0$. Using strict convexity-concavity of $F$ in $U$, one obtains that $\nabla F(x,z) = 0$ implies $x = x^\star$ and $z = z^\star$. The first main result of this section is presented below. 

\begin{Theorem}
If $F$ satisfies Assumption \ref{F assum 1} for some $U\subset \mathbb R^n\times \mathbb R^m$, then the trajectories of \eqref{SP dyn} converge to the saddle point $(x^\star, z^\star)$ in fixed time $T_{SP}$ for all $(x(0),z(0))\in D\subset U$ where $D$ is the largest compact sub-level set of $V(x,z) = \frac{1}{2}\|\nabla F(x,z)\|^2$ in $U$. Furthermore, if $U = \mathbb R^n\times \mathbb R^m$, then the results holds for all $(x(0), z(0))\in \mathbb R^n\times \mathbb R^m$. 
\end{Theorem}
\begin{proof}
Consider the candidate Lyapunov function $V(x,z) = \frac{1}{2}\|\nabla F(x,z)\|^2$. Define $D$ as the largest compact sub-level set of $V$. Using analysis similar to the proof of Theorem \ref{NT FT}, the time derivative of $V$ along the trajectories of \eqref{SP dyn} can be bounded as
\begin{equation*}
    \dot V\leq -c_12^{\frac{\alpha_1}{2}} V^{\frac{\alpha_1}{2}}-c_22^{\frac{\alpha_2}{2}} V^{\frac{\alpha_2}{2}},
\end{equation*}
where $\alpha_1 = 2-\frac{p_1-2}{p_1-1}$ and $\alpha_2 =2- \frac{p_2-2}{p_2-1}$. It follows that for all $t\geq T_{SP}$, $\nabla F(x(t), z(t)) = 0$, or equivalently,  $(x(t), z(t)) = (x^\star, z^\star)$, where $T_{SP} \leq \frac{2^{1-\frac{\alpha_1}{2}}}{c_1(2-\alpha_1)} +  \frac{2^{1-\frac{\alpha_2}{2}}}{c_2(\alpha_2-2)}$ for all $(x(0),z(0))\in D$. 

For the case when $U = \mathbb R^n\times\mathbb R^m$, the sub-level sets of $\|\nabla F\|$ are bounded and so, $V$ is radially unbounded. Therefore, the trajectories of \eqref{SP dyn} converge to the saddle-point of \eqref{min max problem} for all $(x(0), z(0))\in \mathbb R^n\times \mathbb R^m$.
\end{proof}

Assumption \ref{F assum 1} ensures that the Hessian $\nabla^2 F(x,z)$ is invertible for all $(x,z)\in U$, and that the the saddle-point of $F$ is the only critical point. Per the analysis in Lemma \ref{M inv}, a sufficient condition for the Hessian to be invertible is that $\nabla_{xx} F$ is invertible and $\nabla_{zx} F$ is full row-rank. On the basis of this observation, the following result can be stated.

\begin{Cor}\label{cor 1}
Suppose there exists an open set $U\subset \mathbb R^n\times\mathbb R^m$ such that $\nabla_{xx} F(x,z)$ is invertible and $\nabla_{zx} F(x,z)$ is full row-rank for all $(x,z)\in U$. Then, the trajectories of \eqref{SP dyn} converge to the set of the critical points of $F$, defined as $\Omega_U = \{(x,z) \in U\; | \; \nabla F(x,z) = 0\}$ in a fixed time $T_{SP}$ for all $(x(0),z(0))\in D\subset U$, where $D$ is the largest compact sub-level set of $V = \frac{1}{2}\|\nabla F\|^2$ in $U$.
\end{Cor}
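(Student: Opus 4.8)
The plan is to follow the argument of the preceding theorem almost verbatim, exploiting the fact that the modified Newton structure of \eqref{SP dyn} makes the fixed-time estimate insensitive to the convex--concave structure of $F$: the Hessian cancels in the Lyapunov derivative, so all that is really needed is that $\nabla^2 F$ be invertible on $U$ (so that \eqref{SP dyn} is well-defined and the cancellation is legitimate), together with the identification of the zero set of the Lyapunov function with $\Omega_U$. The only genuine change from the theorem is that, having dropped strict convexity--concavity, one can no longer assert a \emph{unique} critical point; hence the conclusion is correspondingly weakened to convergence to the set $\Omega_U = \{\nabla F = 0\}$.

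First I would re-examine the rank computation of Lemma \ref{M inv} under the weaker hypotheses. Writing $A = \nabla_{xx}F$ and keeping $\nabla_{zz}F$ general, the Schur-complement rank identity \cite{paige2008hua} gives
\begin{align*}
\rank \nabla^2 F = \rank A + \rank\bigl(\nabla_{zz}F - (\nabla_{xz}F)\,A^{-1}\,(\nabla_{zx}F)\bigr).
\end{align*}
Since $\nabla_{xx}F$ is invertible on $U$ we have $\rank A = n$, so it remains to show the Schur complement has rank $m$ throughout $U$. This is the main obstacle: the shortcut used in Lemma \ref{M inv}, where positive definiteness of $A$ and of $-\nabla_{zz}F$ forced the Schur complement to be positive definite, is no longer available. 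I would instead establish invertibility of the Schur complement directly from the full-row-rank hypothesis on $\nabla_{xz}F$, which is precisely the condition the corollary isolates as sufficient; this is the step requiring care, and it is also where any implicit structural assumption on $\nabla_{zz}F$ would have to be made explicit.

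Granting invertibility of $\nabla^2 F$ on $U$, the rest is mechanical. Taking $V = \tfrac{1}{2}\|v\|^2$ with $v = \nabla F$ and differentiating along \eqref{SP dyn}, the factor $\nabla^2 F\,(\nabla^2 F)^{-1} = I$ cancels exactly as in Theorem \ref{NT FT}, leaving
\begin{align*}
\dot V = -c_1\|v\|^{\alpha_1} - c_2\|v\|^{\alpha_2} = -c_1 2^{\frac{\alpha_1}{2}}V^{\frac{\alpha_1}{2}} - c_2 2^{\frac{\alpha_2}{2}}V^{\frac{\alpha_2}{2}},
\end{align*}
with $\alpha_1 = 2-\tfrac{p_1-2}{p_1-1} \in (1,2)$ and $\alpha_2 = 2-\tfrac{p_2-2}{p_2-1} > 2$. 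Since $\tfrac{\alpha_1}{2} < 1 < \tfrac{\alpha_2}{2}$, \cite[Corollary 2]{lopez2018necessary} yields $V = 0$, i.e.\ $v = \nabla F = 0$, in a fixed time bounded by $T_{SP}$. Because $V=0$ is attained exactly in finite time, the trajectory reaches a point of $\Omega_U$ at $t = T_{SP}$, which is the claimed conclusion; note that invertibility alone (without definiteness) is exactly why we can only locate the flow in the set $\Omega_U$ rather than at a single saddle point.

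Finally I would address the domain of validity. In the global case $U = \mathbb R^n\times\mathbb R^m$ the estimate above holds everywhere, so the conclusion is immediate for all $(x(0),z(0))$. In the local case I would choose $D\subset U$ to be a forward-invariant neighborhood on which the Hessian stays invertible — extracted from the monotone decrease of $V$ along the flow — so that the cancellation remains valid for all $t\in[0,T_{SP}]$ and the trajectory does not leave $U$ before reaching $\Omega_U$.
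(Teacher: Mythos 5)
Your route is the paper's own: the paper gives no separate proof of this corollary, only the preceding remark that the Schur-complement computation of Lemma \ref{M inv} plus the Lyapunov argument of Theorem \ref{NT FT} carry over, and your second and third paragraphs reproduce exactly that. The genuine problem is the step you flag but never close: invertibility of the Schur complement $\nabla_{zz}F-\nabla_{xz}F\,(\nabla_{xx}F)^{-1}\,\nabla_{zx}F$ does \emph{not} follow from ``$\nabla_{xx}F$ invertible and $\nabla_{xz}F$ full row-rank'' alone. Take $n=m=1$ and $F(x,z)=\tfrac12x^2+xz+\tfrac12z^2$: then $\nabla_{xx}F=1$ is invertible and $\nabla_{xz}F=1$ is full row-rank, yet $\nabla^2F=\left[\begin{smallmatrix}1&1\\1&1\end{smallmatrix}\right]$ is singular, so \eqref{SP dyn} is not even well-defined. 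What Lemma \ref{M inv} actually exploits is the sign structure: $A=\nabla_{xx}F\succ0$ and $C=-\nabla_{zz}F\succeq0$ make $C+B^TA^{-1}B$ positive definite once $B$ has full column rank. So the argument needs to retain some definiteness condition (e.g.\ $\nabla_{xx}F\succ0$ and $\nabla_{zz}F\preceq0$ on $U$); with mere invertibility the Schur complement can vanish. You correctly identify this as ``the step requiring care'' and note that an implicit assumption on $\nabla_{zz}F$ would have to be made explicit, but writing ``I would establish invertibility directly from the full-row-rank hypothesis'' leaves the proof with an unfilled --- and in fact unfillable, under the stated hypotheses --- gap. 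To be fair, this gap sits in the paper's own assertion preceding the corollary, not in anything you introduced.

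Everything downstream of Hessian invertibility is fine and matches the paper: the factor $(\nabla^2F)(\nabla^2F)^{-1}=I$ cancels in $\dot V$, the exponents $\tfrac{\alpha_1}{2}<1<\tfrac{\alpha_2}{2}$ give the fixed-time bound via \cite[Corollary 2]{lopez2018necessary}, and weakening the conclusion from a unique saddle point to the set $\Omega_U$ is exactly the right reading of why this is only a corollary. Your treatment of the local case (a forward-invariant $D\subset U$ obtained from the monotone decrease of $V$) is at the same level of rigor as the paper's unexplained restriction to $D\subset U$; if you wanted to tighten it you would have to argue that a sublevel set of $V=\tfrac12\|\nabla F\|^2$ contained in $U$ exists around the component of $\Omega_U$ being approached, which neither you nor the paper does.
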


\begin{figure*}[hbt!]
\setcounter{equation}{29}
\small
\begin{align}\label{dot V SP first}
    \dot V & = \nabla_x F^T\nabla_{xx}F\dot x +\nabla_x F^T\nabla_{xz}F\dot z+\nabla_z F^T\nabla_{zx}F\dot x+\nabla_z F^T\nabla_{zz}F\dot z \overset{\eqref{SP first order}}{=}\nabla_x F^T\nabla_{xx}F\dot x+\nabla_z F^T\nabla_{zz}F\dot z\nonumber
    \\
    & =-\nabla_x F^T\nabla_{xx}F\left(c_1\frac{\nabla F_x(x,z)}{\|\nabla F(x,z)\|^\frac{p_1-2}{p_1-1}}+c_2\frac{\nabla F_x(x,z)}{\|\nabla F(x,z)\|^\frac{p_2-2}{p_2-1}}\right)+ \nabla_z F^T\nabla_{zz}F\left(c_1\frac{\nabla F_z(x,z)}{\|\nabla F(x,z)\|^\frac{p_1-2}{p_1-1}}+c_2\frac{\nabla F_z(x,z)}{\|\nabla F(x,z)\|^\frac{p_2-2}{p_2-1}}\right).
\end{align}
\setcounter{equation}{28}
\hrulefill
\end{figure*}

Note that Corollary \ref{cor 1} does not require strict convexity-concavity of $F$. Also, if the set $\Omega_U$ contains only the saddle-point, i.e., the only critical point of the function $F$ in $\Omega_U$ is the saddle-point, then Corollary \ref{cor 1} guarantees local convergence of trajectories of \eqref{SP dyn} to the saddle-point in fixed time. The final remark to connect the results in Corollary \ref{cor 1} with those in Section \ref{Sec Eq Const} are made below. 

\begin{Remark}\label{Rem last}
For problem \eqref{Equal cons} with strictly convex $f$ and full row-rank matrix $A$, the conditions of Corollary \ref{cor 1} are satisfied. Furthermore, it is known that for this problem, the Karush-Kuhn-Tucker (KKT) conditions are also sufficient for optimality, i.e., the critical point $(\bar x,\bar z)$ such that $\nabla F(\bar x,\bar z) \triangleq \begin{bmatrix}\nabla f(\bar x) + A^T\bar z\\ A\bar x-b\end{bmatrix} = 0$ is also the optimal point, i.e., $(x^\star, z^\star) = (\bar x, \bar z)$. Hence, one can use \eqref{SP dyn} with $F(x,z) = L(x,z)$ for the problems of the form 
\eqref{Equal cons}, in the case when the conjugate function $f^*$ is not known in the closed-form. 
\end{Remark}

\noindent This is formally shown in the following result.

\begin{Cor}\label{Cor opt lin const FxTS}
Consider the optimization problem \eqref{Equal cons}. Assume that Assumption \ref{assum A full rank} holds and that $f\in C^2(\mathbb R^n, \mathbb R)$ is strictly convex. Then, with $F(x, z) = f(x) + z^T(Ax-b)$, the trajectories of \eqref{SP dyn} reach the saddle-point $(x^\star, z^\star)$, where $x^\star$ is the solution of \eqref{Equal cons}, in fixed time $T_{eq2}<\infty$ for all $(x(0), z(0))\in D\subset U$, where $D$ is the largest compact sub-level set of $V = \frac{1}{2}\|\nabla F\|^2$ in $U$. 
\end{Cor}
\begin{proof}
Define $F(x, z) = f(x) + z^T(Ax-b)$. Note that for strictly convex $f$, $\nabla_{xx}F = \nabla^2 f(x)$ is invertible. Furthermore, $\nabla_{zx}F = A$ is full row-rank, which implies that the conditions of Corollary \ref{cor 1} are satisfied. Hence, the trajectories of \eqref{SP dyn} reach the set of points $(x,z)$ such that $\nabla F(x, z) = 0$. Hence, one obtains that the trajectories of \eqref{SP dyn} for $F(x, z) = f(x) + z^T(Ax-b)$ reach the optimal point of \eqref{Equal cons} in fixed time $T_{eq2} \leq \frac{2^{1-\frac{\alpha_1}{2}}}{c_1(2-\alpha_1)} +  \frac{2^{1-\frac{\alpha_2}{2}}}{c_2(\alpha_2-2)}$. 
\end{proof}

In summary, \eqref{SP dyn} can be used to solve constrained optimization problems of the form \eqref{Equal cons} as well as min-max problems of the form \eqref{min max problem}, and the optimal solutions can be obtained within a fixed time. Compared to \cite{cherukuri2017role,cherukuri2017saddle}, where asymptotic convergence is studied for min-max problems of the form \eqref{min max problem}, and for \eqref{Equal cons} posed as saddle-point problem, the proposed method guarantees convergence within a fixed time under relaxed assumptions. 

The proposed method \eqref{SP dyn} requires computation of the inverse of the Hessian matrix $\nabla^2 F(x,z)$, which can be computationally expensive for problems with large $n,m$. Next, a first-order e, i.e., a method only requiring the gradient of the function $F$, is proposed under a stronger assumption on the function $F$.

\begin{Assumption}\label{F assum 2}
A saddle point $(x^\star, z^\star)$ exists that solves \eqref{min max problem} and $F\in C^2(\mathbb R^n\times\mathbb R^m, \mathbb R)$ is locally strongly convex-concave on open neighborhood $U \subset \mathbb R^n\times \mathbb R^m$ of the saddle point $(x^\star,z^\star)$, i.e., there exist $k_x, k_z>0$ such that $\nabla_{xx}  F(x,z)\succeq k_xI$ and $\nabla_{zz} F(x,z)\preceq -k_zI$ for all $(x,z)\in U$.
\end{Assumption}

Consider the following FxTS-GF based modified saddle-point dynamics
\begin{equation}\label{SP first order}
\begin{split}
    \begin{bmatrix}\dot x\\\dot z\end{bmatrix} &= -c_1\frac{\tilde \nabla F(x,z)}{\|\nabla F(x,z)\|^\frac{p_1-2}{p_1-1}}-c_2\frac{\tilde \nabla F(x,z)}{\|\nabla F(x,z)\|^\frac{p_2-2}{p_2-1}},\\
\end{split}
\end{equation}
where $c_1, c_2>0$, $p_1>2, 1<p_2<2$, $\tilde \nabla F(x,z) \triangleq \begin{bmatrix}\nabla_x F(x,z)^T & -\nabla_z F(x,z)^T\end{bmatrix}^T$. Note that \eqref{SP asym dyn} is a special case of \eqref{SP first order} with $c_1 = 1, c_2 = 0$ and $p_1 = 2$. The following result can be readily stated for \eqref{SP first order}.

\begin{Theorem}\label{SP first order TH}
Suppose the function $F$ satisfies Assumption \ref{F assum 2}. Then, the trajectories of \eqref{SP first order} converge to the saddle-point in a fixed time $T_{SP2}$ for all $(x(0), z(0))\in U$. If $U = \mathbb R^n\times\mathbb R^n$, then the result holds for all $(x(0), z(0))\in\mathbb R^n\times\mathbb R^m$. 
\end{Theorem}

\begin{proof}
Choose the candidate Lyapunov function as $V(x,z) = \frac{1}{2}\|\nabla F(x,z)\|^2$. The time derivative of $V$ along the trajectories of \eqref{SP first order} reads as in \eqref{dot V SP first}. Now, using the strong convexity-concavity of $F$, one obtains {\small
\begin{align*}
    \dot V& \leq -c_1k_x\frac{\|\nabla_x F\|^2}{\|\nabla F\|^\frac{p_1-2}{p_1-1}}-c_2k_x\frac{\|\nabla_x F\|^2}{\|\nabla F\|^\frac{p_1-2}{p_1-1}}\\
    & -c_1k_z\frac{\|\nabla_z F\|^2}{\|\nabla F\|^\frac{p_1-2}{p_1-1}}-c_2k_z\frac{\|\nabla_z F\|^2}{\|\nabla F\|^\frac{p_1-2}{p_1-1}}\\
    & \leq  -c_1k\frac{\|\nabla F\|^2}{\|\nabla F\|^\frac{p_1-2}{p_1-1}}-c_2k\frac{\|\nabla F\|^2}{\|\nabla F\|^\frac{p_1-2}{p_1-1}} = -k_7V^{\frac{\alpha_7}{2}}-k_8V^{\frac{\alpha_8}{2}},
\end{align*}}\normalsize
where $k_7 = c_1k2^{\frac{\alpha_7}{2}},k_8 = c_2k2^{\frac{\alpha_8}{2}}$, $0<\alpha_7 = 2-\frac{p_1-2}{p_1-1}<2$ and $\alpha_8 = 2-\frac{p_2-2}{p_2-1}>2$, where $k = \min\{k_x, k_z\}$. Hence, using Lemma \ref{FxTS TH}, one obtains that the optimal point of \eqref{min max problem} can be found in fixed time $T_{SP2}$ satisfying $T_{SP2}\leq \frac{2}{k_7(2-\alpha_7)}+\frac{2}{k_8(\alpha_8-2)}$. Furthermore, the norm of the gradient $\|\nabla F\|$ is radially unbounded on $U$, and hence, for $U = \mathbb R^n\times\mathbb R^m$, the result holds globally for any $(x(0), z(0))$. 
\end{proof}

\section{Numerical Examples}\label{simulations}
The efficacy of the proposed methods is illustrated via three numerical examples.  The computations are done using MATLAB R2018a on a desktop with a 32GB DDR3 RAM and an Intel Xeon E3-1245 processor (3.4 GHz). Unless mentioned otherwise, Euler discretization is used for \textsc{Matlab} implementation with time-step $dt=10^{-5}$, { and with constant step-size, the convergence time $T$ in seconds translates to $T\times 10^5$ iterations.} 
In the first example, an instance of the logistic regression based support-vector machine is considered, where the performance of the proposed FxTS-GF is compared with Newton's method. The flow in \eqref{fixed flow} is used to find the optimal solution, i.e., the separating hyperplane for a given labelled data set, within a fixed time. In the second example, an instance of QP with equality constraints is considered as a constrained convex optimization problem \eqref{Equal cons}. The FxTS saddle-point dynamics in \eqref{SP dyn} is used to find the optimal point of the problem, and to illustrate that for any initial condition, the optimal point can be found within a fixed time. 
Finally, an instance of the min-max problem \eqref{min max problem} is considered, and the FxTS saddle-point dynamics in \eqref{SP dyn} is used to find the saddle-point. 

\subsection{Support Vector Machine: Unconstrained optimization}
Consider an instance of logistic SVM, where the function $f:\mathbb R^2\rightarrow \mathbb R$ is given as
\begin{align}
    f(x) = \frac{1}{2}\|x\|^2+\frac{1}{\mu}\sum_{i = 1}^N\log\left(1+\exp(-\mu l_{i}x^Tz_{i})\right),
\end{align}
with $\mu>0$ a large positive number. Here $x = x^\star\in \mathbb R^2$ represents the separating hyperplane, and $z_{i}\in \mathbb R^2$ and $l_{i}\in \{-1, 1\}$ denote the $i$-th data point and its corresponding label, respectively. The vectors $z_{i}$ are chosen from a random distribution around the line $x_1 = x_2$, so that the solution $x^\star$, i.e., the separating hyperplane, to the minimization problem $\min\limits_{x\in \mathbb R^n} f(x)$ is the vector $[1, \; -1]$. In this case, $N = 500$ randomly distributed data points are considered. {The parameters used in the numerical simulations are $c_1 = c_2 = 10$, $p_1 = 2.6, p_2 = 1.6$, and $\mu = 2$. The theoretical bound on the time of convergence is $T_1\leq 0.43$.} Figure \ref{fig:z dist} shows the distribution of $z_{i}$ around the line $x_1 = x_2$. 

\begin{figure}[!ht]
    \centering
        \includegraphics[ width=0.5\columnwidth,clip]{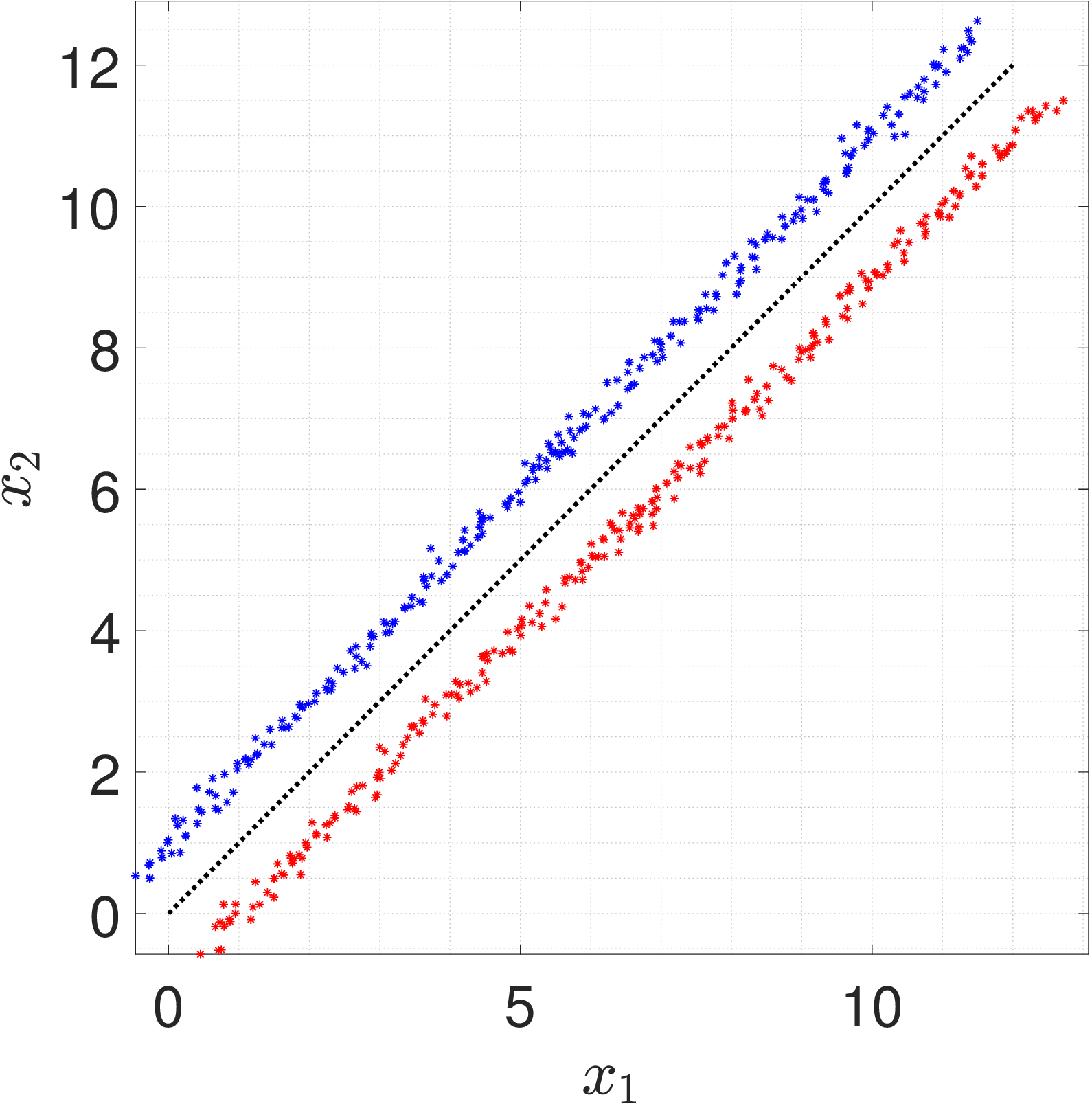}
        \caption{Distribution of points $z_{i}$ around the line $x_1 = x_2$ (red dotted line). Blue and red stars denote the points corresponding to $l_{i} = -1$ and $l_{i} = 1$, respectively. .}\label{fig:z dist}
\end{figure}

\begin{figure}[!ht]
    \centering
            \includegraphics[ width=1\columnwidth,clip]{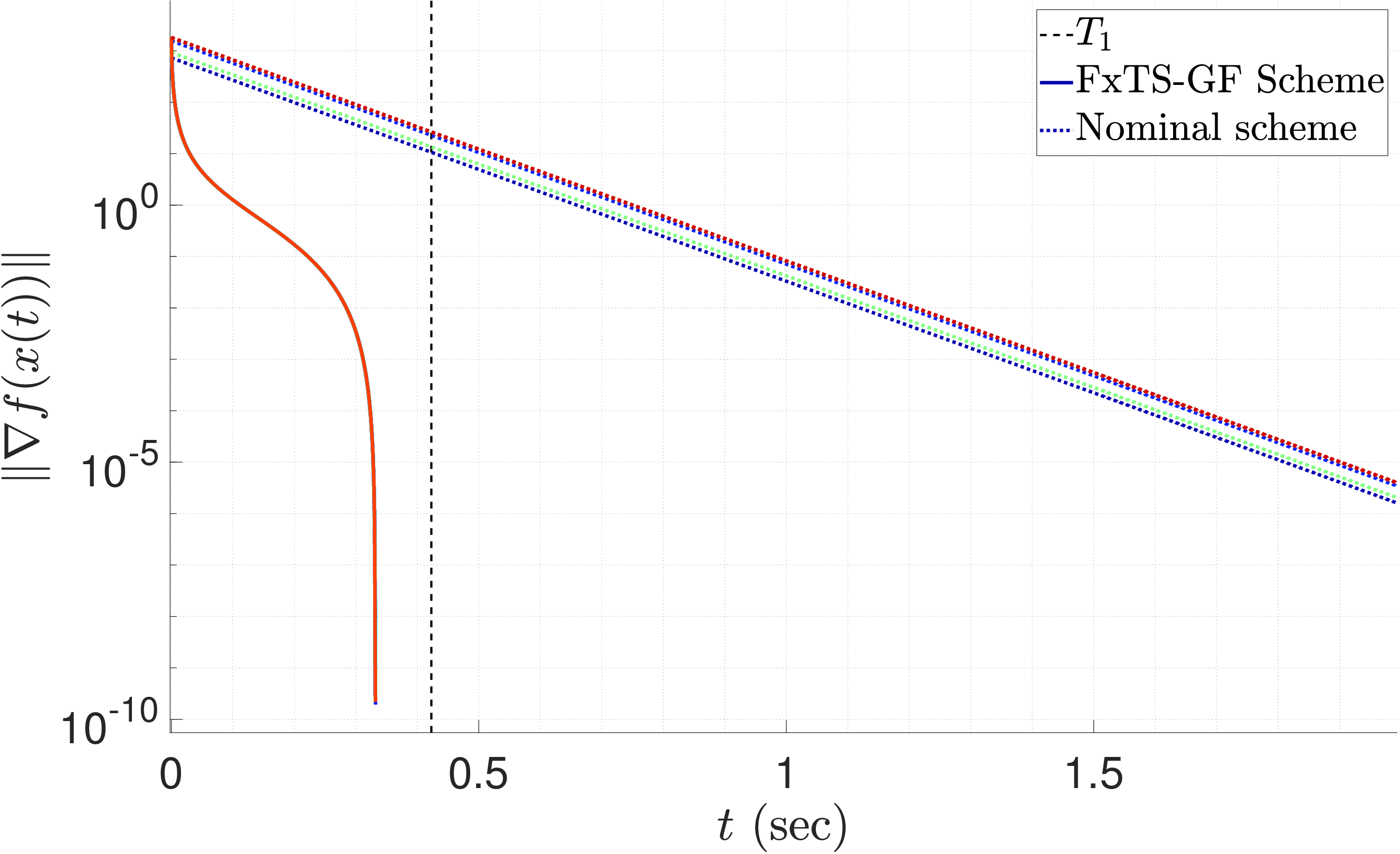}
        \caption{The norm of gradient, $\|\nabla f(x(t))\|$, with time for various initial conditions for the FxTS-GF (solid lines) and the Newton's method (dotted lines).}\label{fig: comparison SVM nabla}
\end{figure}

Figure \ref{fig: comparison SVM nabla} depicts the norm of the gradient $\|\nabla f(x(t))\|$ with time for various initial conditions. The $\log$ scale is used on $y-$ axis so that the variation of the norm $\|\nabla f\|$ is clearly shown for values near zero, and the super-linear nature of convergence can be demonstrated. Note that the plots corresponding to the nominal GF method are linear on the $\log$-scale, which verifies that the convergence is exponential, while the curved plots of the proposed scheme illustrate the super-linear convergence. It can also be noted that the convergence time (upto the error of $10^{-10}$) is bounded by the theoretical bound of $T_1$. Thus, per Figure \ref{fig: comparison SVM nabla}, nominal GF takes at least 5 times more iterations as compared to the FxTS-GF, in order to converge to the same level of accuracy.

\subsection{Example 2: QP with equality constraints}
Consider \eqref{opt quad lin cons ex} with $x\in \mathbb R^{10}$ and $A\in \mathbb R^{5\times 10}$. For simplicity, consider a diagonal matrix $Q$ with positive diagonal elements and a full row-rank matrix $A$, so that all the conditions of Corollary \ref{Cor opt lin const FxTS} are satisfied. The values of $Q, A, b, c$ are chosen through random matrix generator in \textsc{Matlab}. The following parameters are used for FxTS-SP dynamics in \eqref{SP dyn}: $c_1 = 10,\;  c_2 = 10,\; p_1 = 2.2, \; p_2 = 1.8$. With these parameters, the upper bound on the time of convergence in Corollary \ref{Cor opt lin const FxTS} satisfies $T_{eq2} = T_{SP}\leq 1.0025$. 

Figure \ref{fig:nom x QP} 
compares the performance of the proposed method relative to Newton's method for saddle-point dynamics, i.e., \eqref{SP dyn} with $c_2 = 0$ and $p_2 = 2$. The dotted lines illustrate the evolution of Newton's method, while solid lines illustrate that of FxTS-SP dynamics \eqref{SP dyn}. The vertical black dashed black line corresponds to $T_{SP} = 1.0025$ sec. Figure \ref{fig:nom x QP} shows the variation of $\|x(t)-x^\star\|$ with time for various initial conditions. The proposed scheme converges to the error of magnitude less than $10^{-8}$ within $T_{SP}$ sec, while the nominal scheme takes longer time (and thus, more number of iterations) to achieve the same. It can also be seen that the convergence time is always bounded by $T_{SP}$ for all initial conditions for the proposed method. 


\begin{figure}[!ht]
    \centering
        \includegraphics[ width=1\columnwidth,clip]{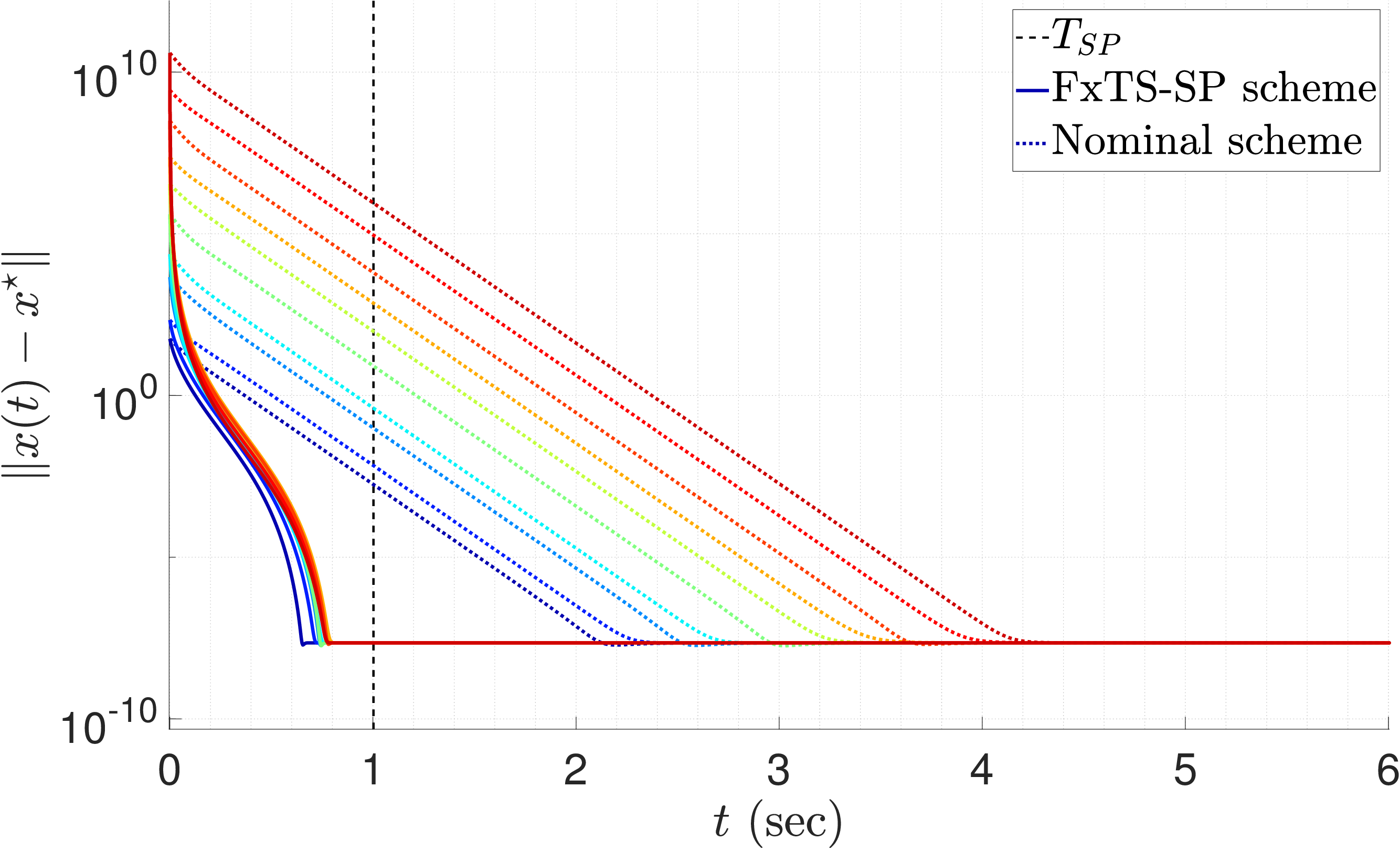}
        \caption{The norm $\|x(t)-x^\star\|$ with time for various initial conditions for nominal saddle-point dynamics ($p_1 = 2, c_2 = 0$) and FxTS saddle-point dynamics ($p_1= 2.2, p_2 = 1.8$). }
    \label{fig:nom x QP}
\end{figure}


\subsection{Example 3: Min-max problem}

A numerical example for the min-max problem $\max\limits_z\min\limits_xF(x,z)$ is considered, where the function $F$ is defined as:
\begin{equation}\label{eq: min max sim example}
    F(x,z) = (\|x\|-1)^4-\|z\|^2\|x\|^2,
\end{equation}
with $x\in \mathbb R^n$ and $z\in \mathbb R^m$. The dimensions are chosen as $n = 3$ and $m = 1$. The set of optimal points $(x,z)$ satisfy $\|x\| = 1, \|z\| = 0$  \cite{cherukuri2017saddle}, i.e., the optimal point is not unique in this case. The parameters $c_1$, $c_2$ are chosen as $c_1 = c_2 = 10$. 

The first case study considers a varying range of initial conditions $(x(0), z(0))$ and constant values of the parameters $p_1, p_2$, chosen as $p_1 = 2.2$ and $p_2 = 1.8$. Figure \ref{fig:tc e0} shows the convergence time (upto an error of $\|\nabla F(x,z)\|\leq 10^{-15})$ for various initial conditions $x(0), z(0)$. The results illustrate that the time of convergence does not depend upon the initial distance from the saddle point. Also, the actual time of convergence $T_c$ is lower than the upper bound $T_{SP}$.  

\begin{figure}[!ht]
    \centering
        \includegraphics[ width=1\columnwidth,clip]{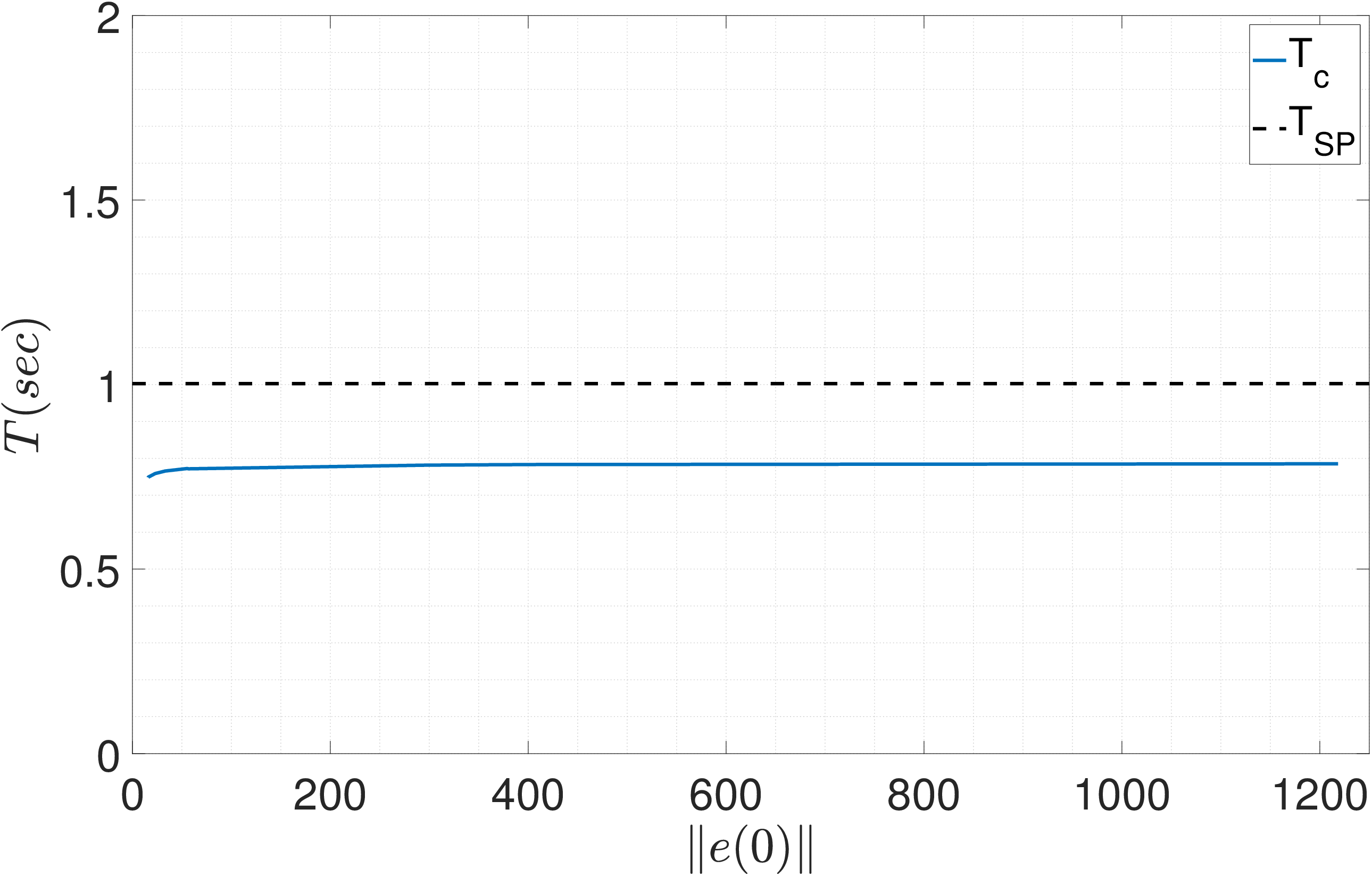}
        \caption{Time of convergence $T_c$ with norm of the initial error $\|e(0)\| \triangleq \|[(x(0)-x^\star)^T \; (z(0)-z^\star)^T]^T\| $.}
    \label{fig:tc e0}   
\end{figure}

\begin{figure}[!ht]
    \centering
        \includegraphics[ width=1\columnwidth,clip]{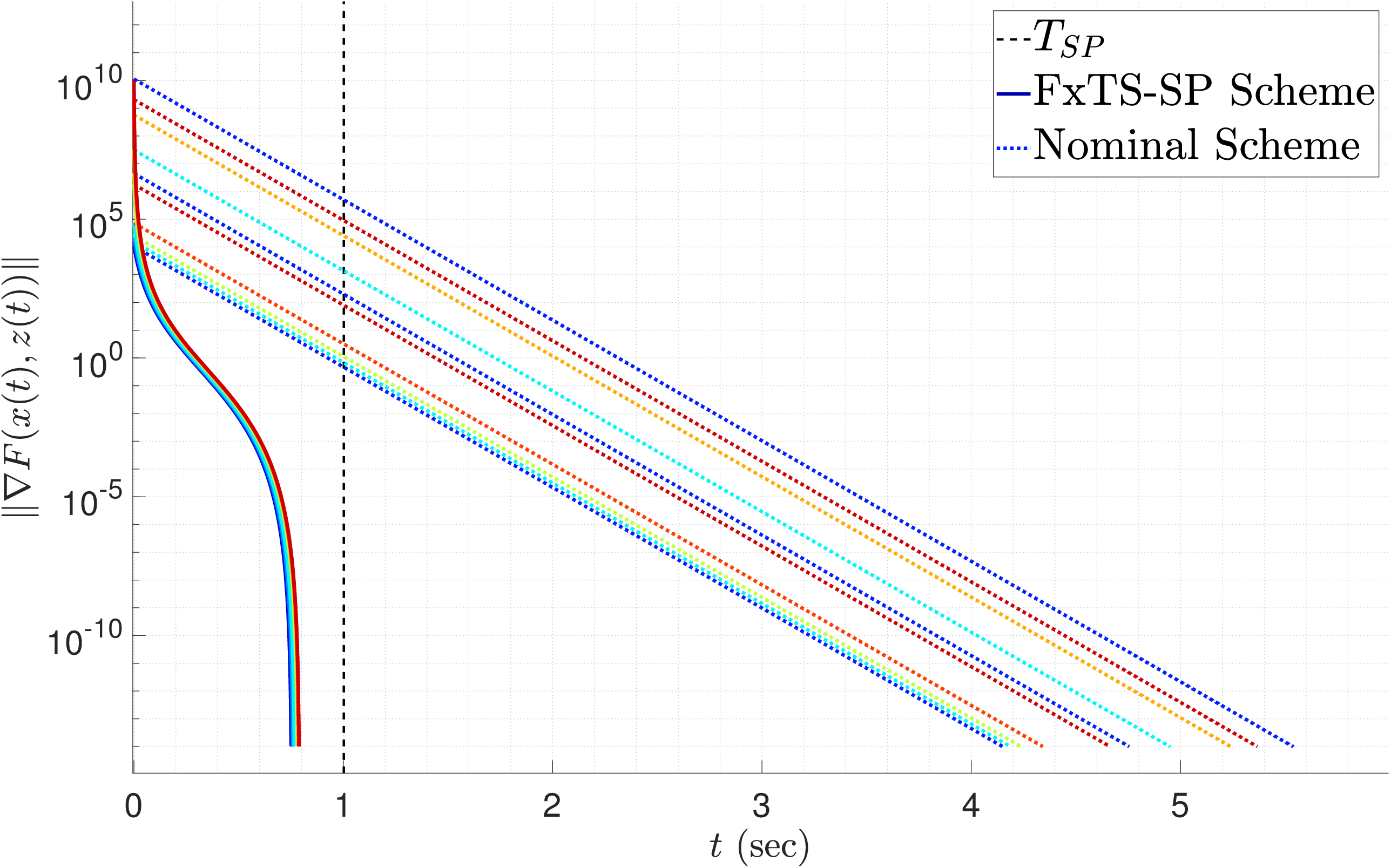}
        \caption{The norm of the gradient, $\|\nabla F(x(t),z(t))\|$, with time for various initial conditions for nominal saddle-point dynamics ($p_1 = 2, c_2 = 0$) and FxTS saddle-point dynamics ($p_1= 2.2, p_2 = 1.8$). }
    \label{fig:SP nabla F norm}
\end{figure}

Figure \ref{fig:SP nabla F norm} illustrates the convergence of norm of the gradient, $\|\nabla F(x,z)\|$, to zero in fixed time for various initial conditions. Figure \ref{fig:SP x norm} and \ref{fig:SP z norm} plot the norm of the error $x-x^\star$ and $z - z^\star$, respectively, for various initial conditions. Solid lines show the performance of the proposed method \eqref{SP dyn}, and dotted lines show the performance of Newton's method ($c_2 = 0, p_2= 2$) when solving for saddle-point dynamics. 
Again, it can be noted from the figures that the proposed method converges within the theoretical bound on $T_{SP}$, and has super-linear convergence rate. 

\begin{figure}[!ht]
    \centering
        \includegraphics[ width=1\columnwidth,clip]{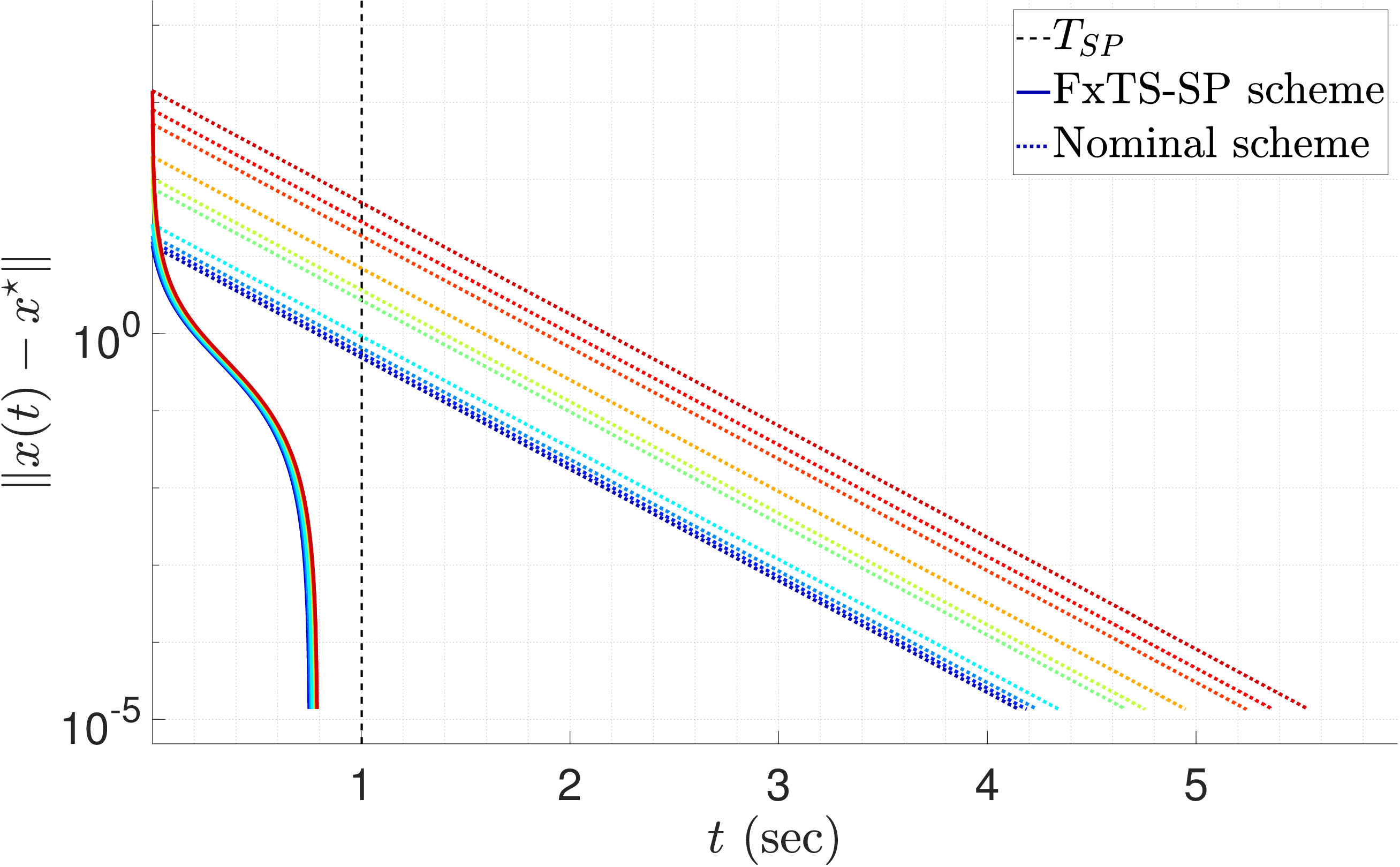}
        \caption{The norm $\|x-x^\star\|$ with time for various initial conditions for nominal saddle-point dynamics ($p_1 = 2, c_2 = 0$) and FxTS saddle-point dynamics ($p_1= 2.2, p_2 = 1.8$). }
    \label{fig:SP x norm}
\end{figure}

\begin{figure}[!ht]
    \centering
        \includegraphics[ width=1\columnwidth,clip]{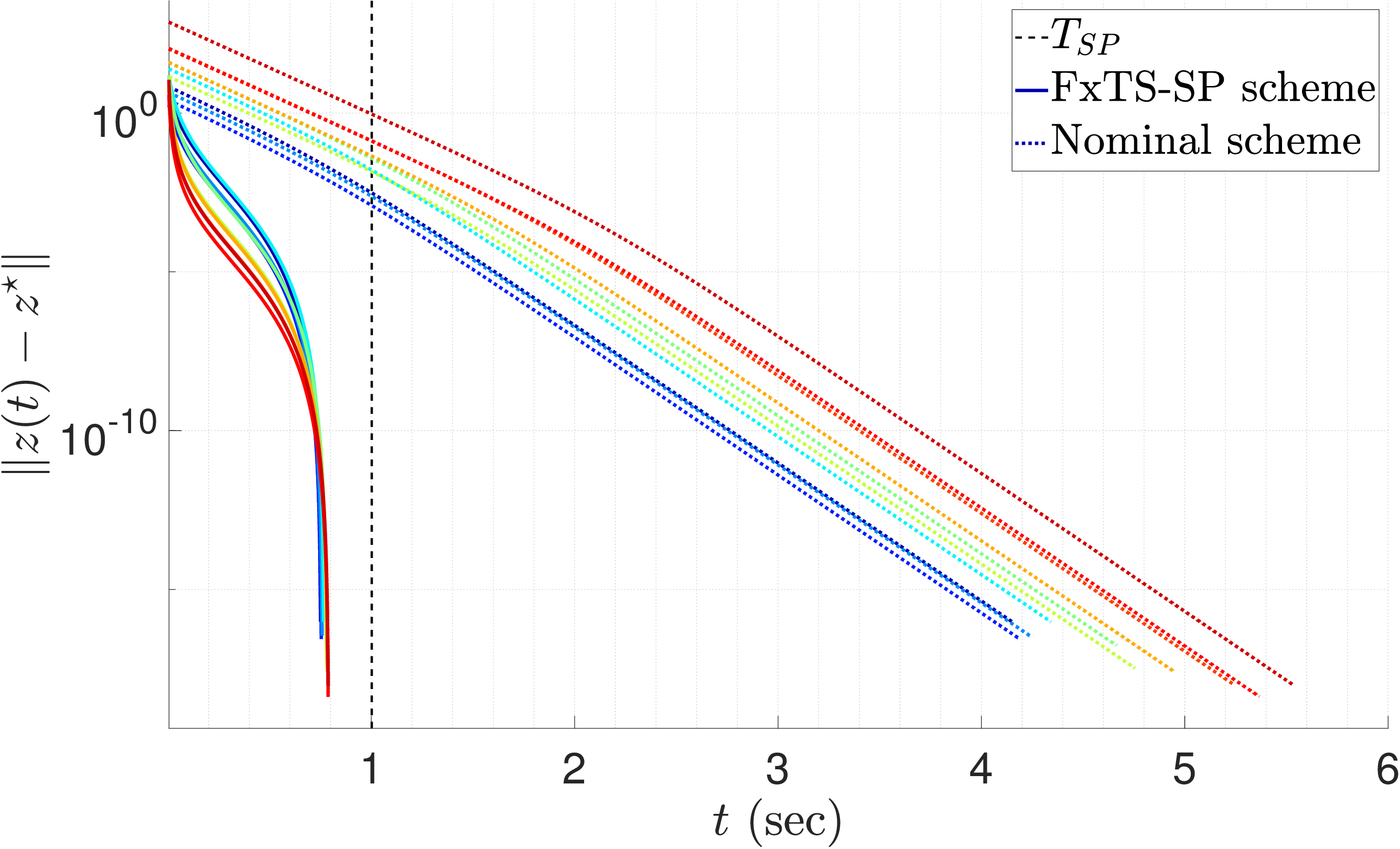}
        \caption{The norm $\|z-z^\star\|$ with time for various initial conditions for nominal saddle-point dynamics ($p_1 = 2, c_2 = 0$) and FxTS saddle-point dynamics ($p_1= 2.2, p_2 = 1.8$). }
    \label{fig:SP z norm}
\end{figure}

The second case study considers that the parameters $p_1, p_2$ are varied in the ranges $[2,\;2.2]$ and $[1.8, \;2]$, respectively. Figure \ref{fig:nabla f val} shows the norm of the gradient, $\|\nabla F(x,z)\|$, with time. As can be seen in the Figure \ref{fig:nabla f val}, the case when $p_1= p_2 = 2$ has linear convergence (straight line on the $\log$ plot), while for $p_1>2$ and $p_2<2$, the convergence is super-linear. It can also be observed that as $p_1$ increases and $p_2$ decreases, the convergence becomes faster and the time of convergence becomes smaller. 

\begin{figure}[!ht]
    \centering
        \includegraphics[ width=1\columnwidth,clip]{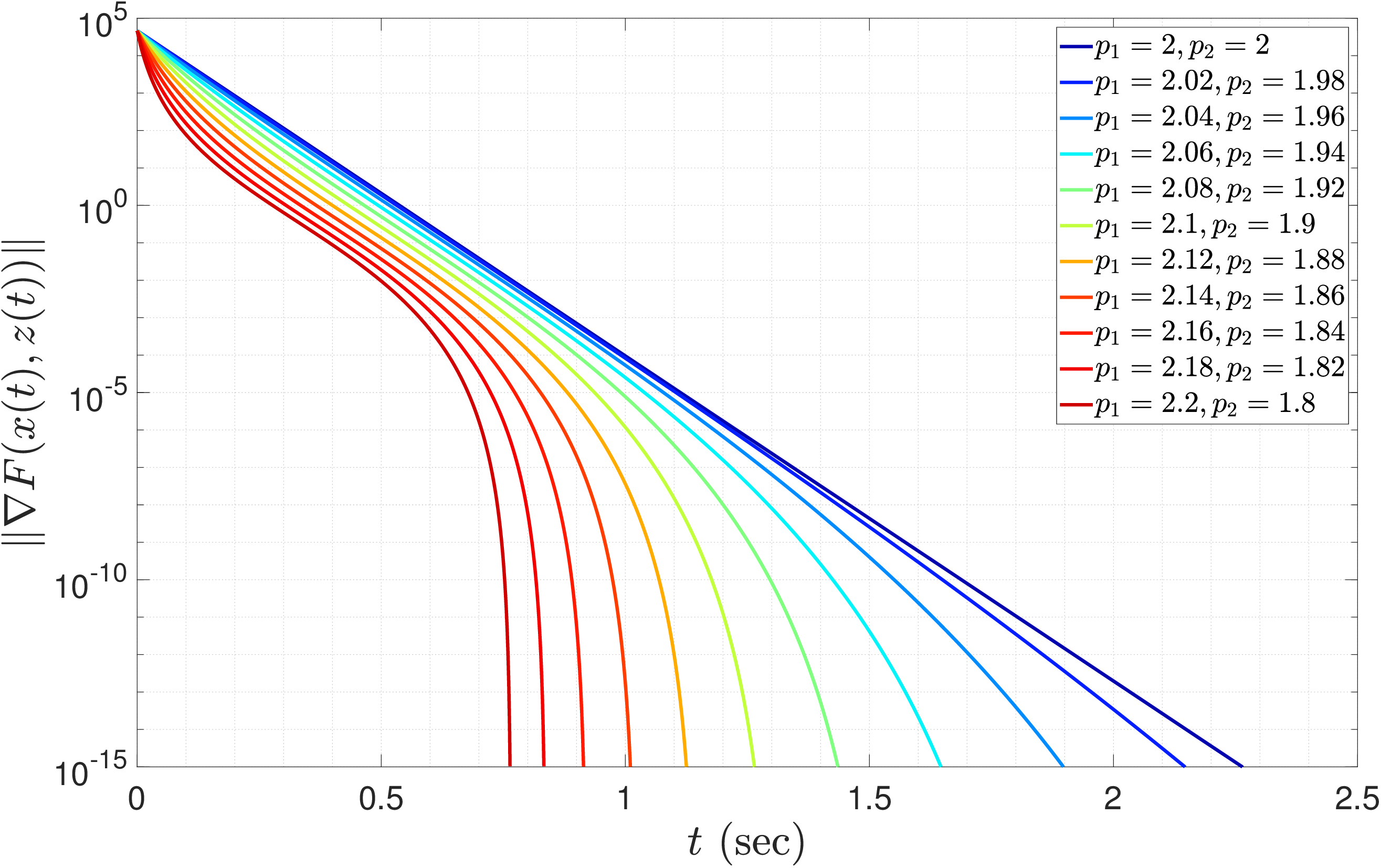}
        \caption{The norm of the gradient, $\|\nabla F(x(t),z(t))\|$, with time for various $p_1, p_2$.}
    \label{fig:nabla f val}
\end{figure}

The implementation of the proposed method in numerical studies is done using Euler integration with constant step size. Figure \ref{fig:nabla f dt} shows the performance of the proposed method for various values of discretization steps between $10^{-2}$ and $10^{-6}$. As the figure suggests, the discretization step does not affect the convergence performance of the proposed method.

\begin{figure}[!ht]
    \centering
        \includegraphics[ width=1\columnwidth,clip]{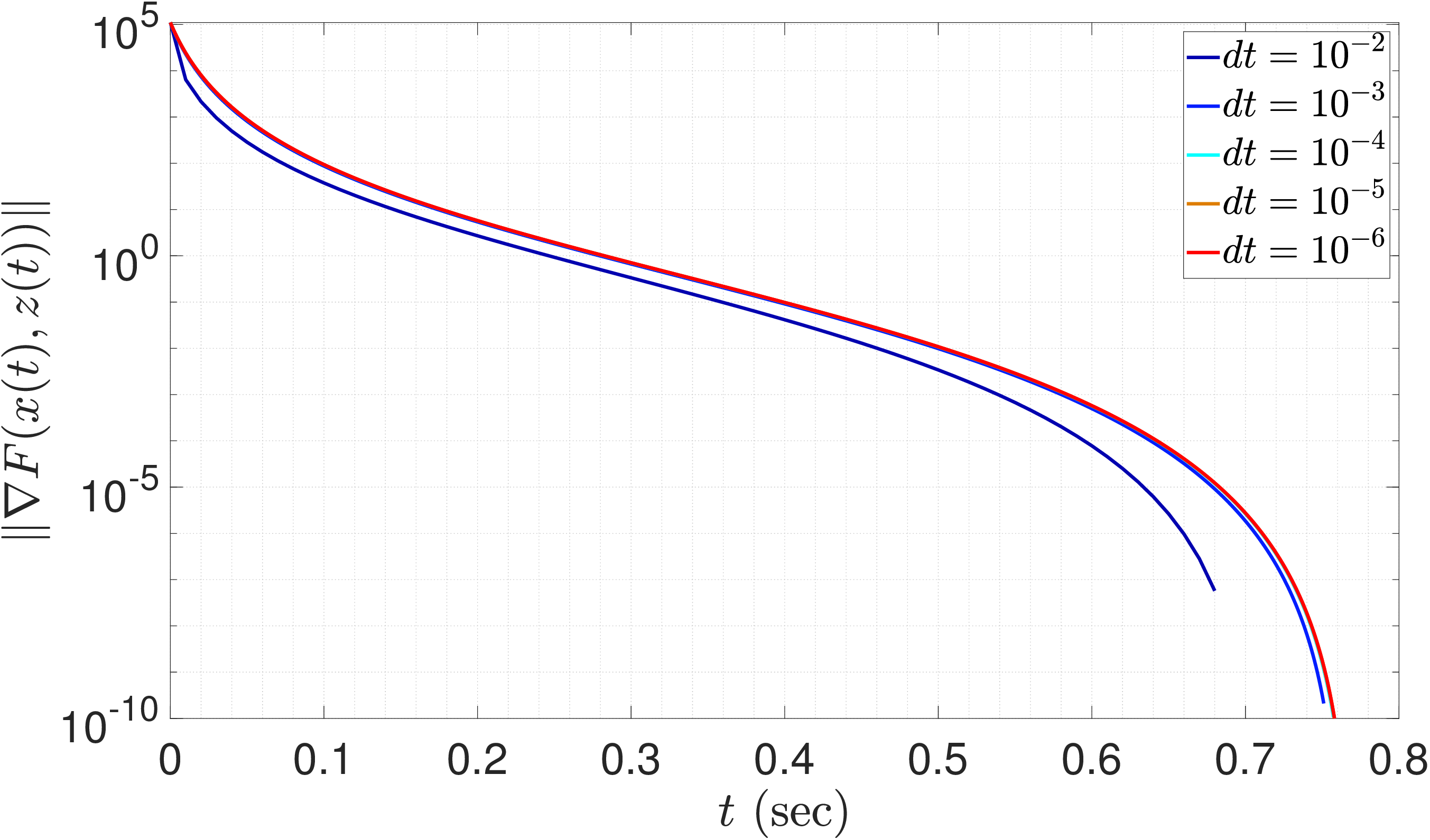}
        \caption{The norm of the gradient, $\|\nabla F(x(t),z(t))\|$, with time for various $p_1, p_2$.}
    \label{fig:nabla f dt}
\end{figure}

Finally, the performance of the proposed method is compared with the performance of the rescaled gradient flow \eqref{p flow}.
More specifically, the considered rescaled-gradient flow scheme is
\begin{equation}\label{acc SP}
    \begin{bmatrix}\dot x \\ \dot z\end{bmatrix} = -c_1\left(\nabla^2 F(x,z)\right)^{-1}\frac{\nabla F(x,z)}{\|\nabla F(x,z)\|^{\frac{p_1-2}{p_1-1}}}.
\end{equation}
where $0<\theta<1$. 
{Since the objective function in \eqref{eq: min max sim example} is only strictly convex-concave and not strongly convex-concave, \eqref{SP first order} cannot be used, but \eqref{SP dyn} can be used. The dynamical system \eqref{acc SP} is a Newton's modification of rescaled-gradient flow \eqref{p flow} discussed in \cite{wibisono2016variational}, where Hessian is used so that \eqref{acc SP} can be used for a strictly convex-concave function.}

\begin{figure}[!ht]
    \centering
        \includegraphics[ width=1\columnwidth,clip]{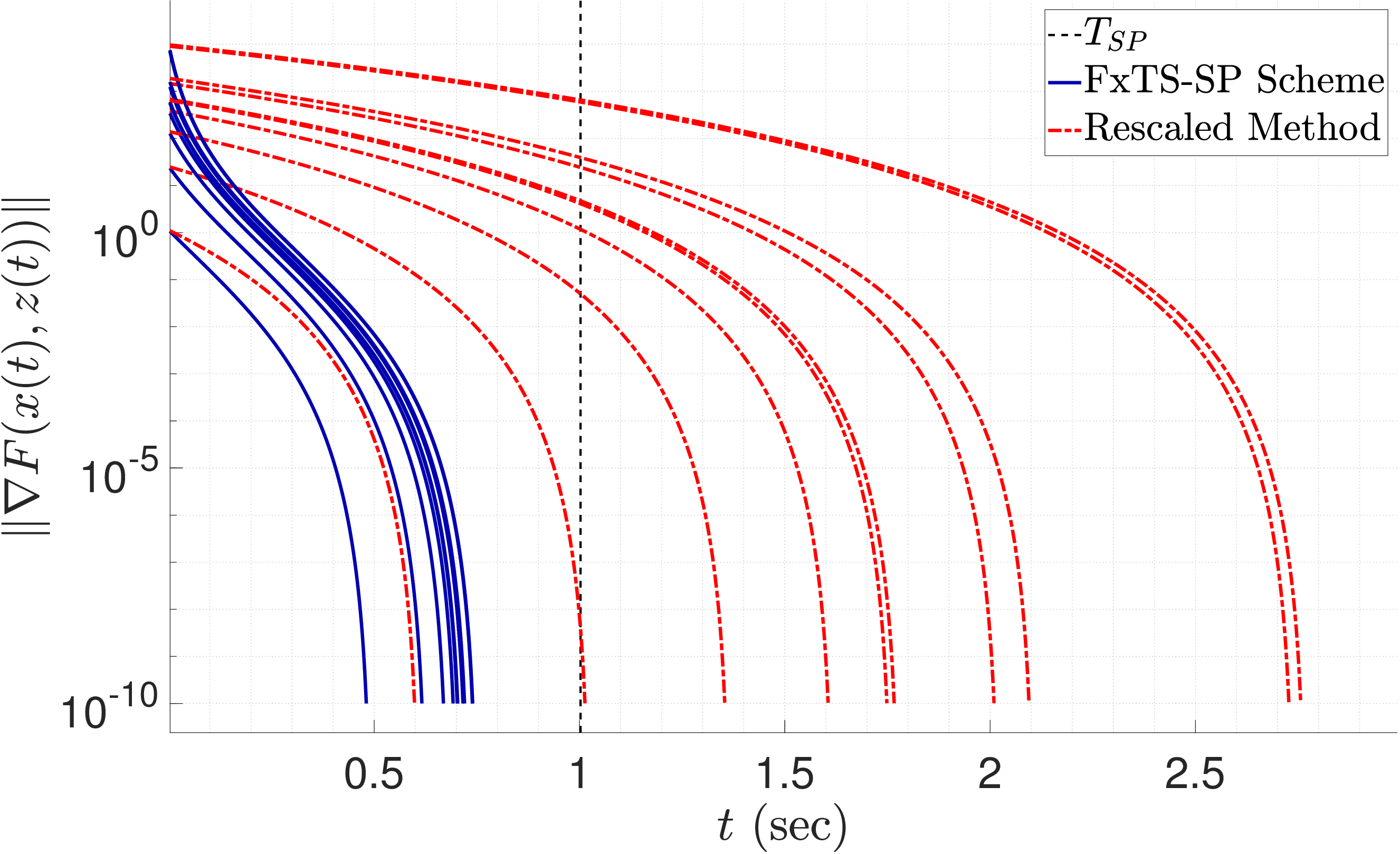}
        \caption{The norm of gradient, $\|\nabla F(x(t),z(t))\|$, with time for various initial conditions for the proposed scheme and the rescaled gradient flow scheme. 
        }\label{fig: comp sec nabx}
\end{figure}

Figure \ref{fig: comp sec nabx} plots the norm of the gradient for various initial conditions, where $p_1 = 2.2, p_2 = 1.8$, $c_1 = c_2 = 10$ for \eqref{SP dyn}, $p_1 = 2.2$, $c_1 = 10$ for \eqref{acc SP}.
It can be seen that the convergence of 
the rescaled gradient flow scheme \eqref{acc SP} is super-linear (finite-time convergence), but slower than the proposed scheme. It is evident from Figure \ref{fig: comp sec nabx} that the time of convergence for \eqref{acc SP} grows as $\|x(0)-x^\star\|$ increases, while that of the proposed scheme \eqref{SP dyn} remains bounded.

\begin{figure}[!ht]
    \centering
        \includegraphics[ width=1\columnwidth,clip]{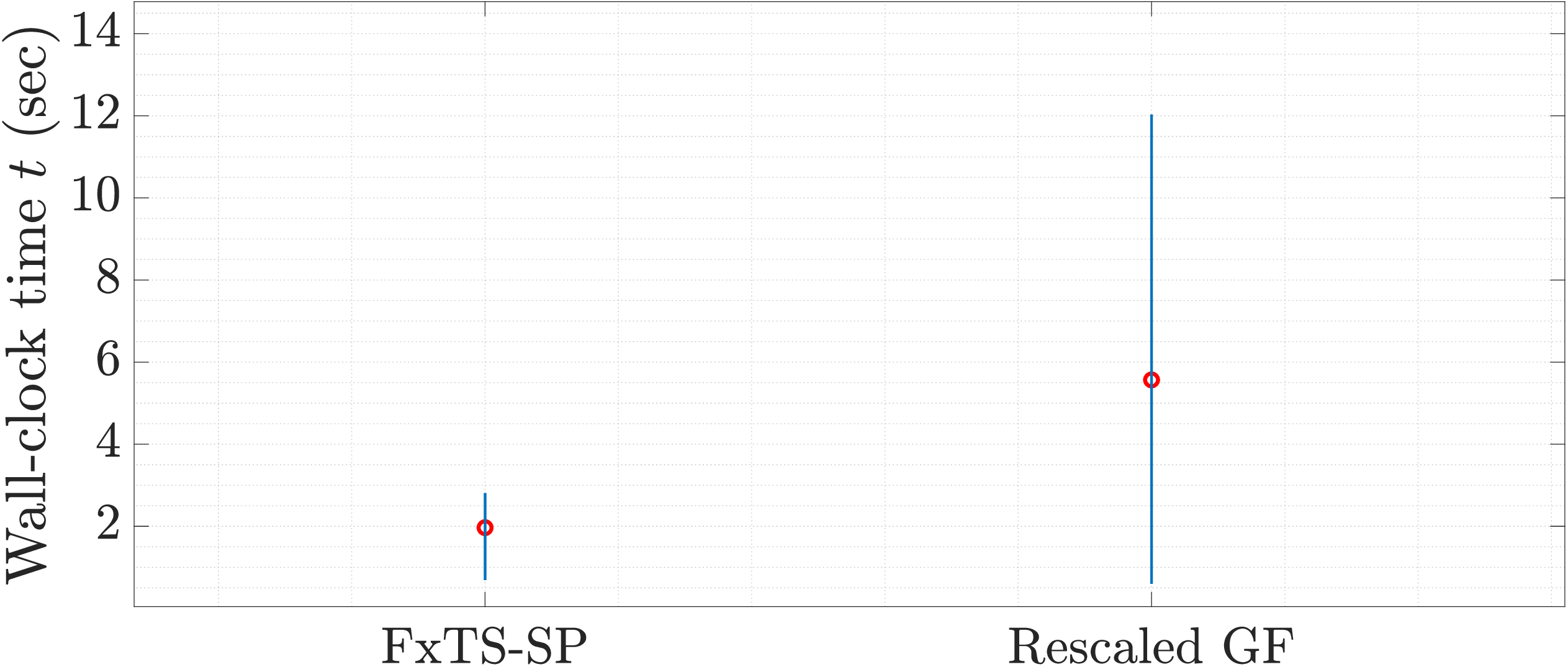}
        \caption{The wall-clock time for 1000 trials for the proposed scheme, rescaled gradient flow based scheme
        . The red dot represents the mean value for the 1000 trials while the vertical lines represent the minimum and maximum values of the respective schemes. }\label{fig: comp sec time}
\end{figure}

Figure \ref{fig: comp sec time} depicts the wall-clock time (i.e., actual run-time) for the 
two aforementioned schemes. The results are presented for 1000 trials, where the simulations are run until the norm of the gradient, $\|\nabla F(x(t),z(t))\|$, drops below $10^{-10}$. It is clear from the figures the proposed scheme takes smaller computation time than the 
accelerated scheme, while giving better convergence rate. {Note that the wall-clock time, which corresponds to the actual computational time, is different from the convergence time $T_{SP}$, which, in the discrete setting, corresponds to the number of steps required for the convergence per the relation $N = T_{SP}\times 10^5$.} It is evident that the proposed method performs better than the nominal Newton's method, as well as some of the very commonly used accelerated methods, both in terms of number of iterations required for converging to a small neighborhood of the optimal solution, and wall-clock time. 
In the next section, we discuss some of the limitations of the study of continuous-time optimization theory, and lay out directions for future work, based on recent developments in the field of rate-preserving and consistent discretization schemes.

\section{Discussion}\label{Discussions}
While optimization methods in continuous-time are important and have major theoretical relevance in general, only discrete-time algorithms are of practical use. It is an open question as to how one can discretize the dynamics \eqref{fixed flow} and other schemes presented in this work, so that the fixed-time convergence guarantees are provably preserved. While in all the numerical examples the performance of discretized implementation is at par with the theoretical results, i.e., the convergence is super-linear and the time of convergence is upper bounded by the theoretically established upper bound, the theoretical investigation on how the convergence properties are preserved after discretization is an open problem, and an active field of research (see \cite{polyakov2018consistent,polyakov2019consistent}).

In \cite{polyakov2018consistent}, the authors study a particular class of homogeneous systems, and show that there exists a \textit{consistent} discretization scheme that preserves the finite-time convergence. They extend their results to \textit{practically} FxTS systems in \cite{polyakov2019consistent}, where they show that the trajectories of the discretized system reach to an arbitrary small neighborhood of the equilibrium point in fixed time, independent of the initial condition. Given that the provided numerical examples suggest that the proposed method works efficiently even with constant-step Euler integration, the questions that naturally arise are: (i) how could the theory of consistent discretization be extended to a more general class of FTS and FxTS systems, and (ii) how this theory could be used for the methods developed in this paper so that exact convergence of iterative discrete-time optimization schemes for the proposed methods can be guaranteed in a finite or fixed number of steps. These topics are beyond the scope of the current paper, and are left open for future research.

\section{Conclusions and Future Work}\label{Conclusions}
This paper presented modified GF schemes that provide convergence of the solution to the optimal point in fixed time, under various assumptions such as strict convexity and gradient dominance, which is a relaxation of strong-convexity.
A modified version of Newton's method is also presented that possesses fixed-time convergence guarantees from any given initial condition for optimization problems with strictly convex objective function. 
Based on this result, a novel method is proposed to find the optimal point of a convex optimization problem with linear equality constraints in fixed time. 
A modified scheme for the saddle-point dynamics is proposed so that the min-max problem can be solved in fixed time. Though all the methods are presented for continuous-time optimization, numerical examples illustrate that the proposed schemes have super-linear convergence in the discretized implementation as well, that the time of convergence satisfies the theoretical bound, and that the performance of the proposed method is better than the one of commonly used algorithms, such as Newton's method, 
the rescaled gradient-based method.

Studying the general optimization problem with both equality and inequality constraints is part of the future investigations, where schemes that can converge to the optimal point in fixed time will be designed. Also, it will be of great interest to study FTS and FxTS methods of optimization on function spaces with applications such as finding the optimal barrier function for control synthesis under spatio-temporal specifications and input constraints. 
Finally, as mentioned in Section \ref{Discussions}, one of the future research directions is to investigate discretization schemes for FTS and FxTS systems that can preserve the time of convergence, and translate FTS and FxTS to convergence in finite and fixed number of steps, respectively.  
\section{Acknowledgements}
The authors acknowledge Dr. Rohit Gupta for several fruitful discussions. 

\bibliographystyle{IEEEtran}
\bibliography{myreferences}

\appendices

\section{Proof of Lemma \ref{eq point fixed point}}\label{app proof Lemma eq fixed point}
\begin{proof}
One has that $x = \bar x$ is an equilibrium of \eqref{fixed flow} if and only if {\small
\begin{equation*}
\begin{split}
    \dot {\bar x} = 0 \iff &  -c_1\frac{\nabla f(\bar x)}{\|\nabla f(\bar x)\|^\frac{p_1-2}{p_1-1}}-c_2\frac{\nabla f(\bar x)}{\|\nabla f(\bar x)\|^\frac{p_2-2}{p_2-1}} = 0\\\iff & c_1\frac{\|\nabla f(\bar x)\|}{\|\nabla f(\bar x)\|^\frac{p_1-2}{p_1-1}}+c_2\frac{\|\nabla f(\bar x)\|}{\|\nabla f(\bar x)\|^\frac{p_2-2}{p_2-1}} = 0\\
     \iff & c_1\|\nabla f(\bar x)\|^{1-\frac{p_1-2}{p_1-1}}+c_2\|\nabla f(\bar x)\|^{1-\frac{p_2-2}{p_2-1}} = 0,\\
     \iff & \|\nabla f(\bar x)\| = 0,
\end{split}
\end{equation*}}\normalsize
since $1-\frac{p_1-2}{p_1-1}, 1-\frac{p_2-2}{p_2-1}>0$ for $p_1>2$ and $1<p_2<2$. Hence, $x = \bar x$ is an equilibrium point if and only if $\nabla f(\bar x) = 0$. This completes the proof.
\end{proof}

\section{Proof of Lemma \ref{cont fixed flow}}\label{app proof Lemma cont}
\begin{proof}
Let $\mathcal X = \{x\; |\; \nabla f(x) = 0\}$. Since $f\in C^{1,1}_{loc}$, continuity of right-hand side of \eqref{fixed flow} is immediate on $\mathbb R^n\setminus\mathcal X$. Let $\bar x\in \mathcal X$  and $L$ be the Lipschitz constant for function $\nabla f$, i.e., $\|\nabla f(x)-\nabla f(y)\| \leq L\|x-y\|$ for $x,y\in D$ where $D$ is some open neighborhood of $\bar x$. For $y = \bar x$, it follows that $\|\nabla f(x) - \nabla f(\bar x)\| = \|\nabla f(x)\|$; then, using continuity of the norm, one has {\small
\begin{equation*}
\begin{split}
     \left\|\lim_{x\rightarrow\bar x}c_1\frac{\nabla f(x)}{\|\nabla f(x)\|^\frac{p_1-2}{p_1-1}}\right\| & = \lim_{x\rightarrow\bar x}c_1\left\|\frac{\nabla f(x)}{\|\nabla f(x)\|^\frac{p_1-2}{p_1-1}}\right\|\\ 
     & = c_1\lim_{x\rightarrow\bar x} \|\nabla f(x)\|^{1-\frac{p_1-2}{p_1-1}}  \\
     & = c_1 \lim_{x\rightarrow\bar x}\|\nabla f(x)\|^{\delta_1}\\
     & \leq c_1L^{\delta_1}\lim_{x\rightarrow\bar x} \|x-\bar x\|^{\delta_1} = 0,
\end{split}
\end{equation*}}\normalsize
where $\delta_1 = 1-\frac{p_1-2}{p_1-1}>0$ for $p_1>2$. Hence, one has that $ \lim_{x\rightarrow\bar x}c_1\frac{\nabla f(x)}{\|\nabla f(x)\|^\frac{p_1-2}{p_1-1}} = 0$. Similarly, it can be shown that $\lim_{x\rightarrow\bar x}c_2\frac{\nabla f}{\|\nabla f\|^\frac{p_2-2}{p_2-1}}=0,$ since $\delta_2 = 1-\frac{p_2-2}{p_2-1}>0$ for all $1<p_2<2$. Per Lemma \ref{eq point fixed point}, one has that $\bar x$ is an equilibrium of \eqref{fixed flow}. This implies that the right-hand side of \eqref{fixed flow} is continuous at $x = \bar x$, for all $\bar x\in \mathcal X$, and hence, is continuous for all $x\in \mathbb R^n$. 
\end{proof}

\section{Proof of Lemma \ref{smooth convex}}

\begin{proof}\label{app smooth convex}
The {convexity and strong-smoothness} assumptions on $f$ implies that $f^{**} = f$, i.e., $f$ is the conjugate of its conjugate $f^*$. Define $\kappa = f^*$ so that one has $\kappa^* = f^{**} = f$. Now, since the function $f$ is the conjugate of $\kappa$ and is $\beta$-strongly smooth, from \cite[Section 3.5]{zalinescu2002convex}, one obtains that there exists $\beta^*$ such that $\kappa$ is a $\beta^*$-strongly convex function. It holds that if $A$ is full row-rank, then $\beta^*$-strong-convexity of $f^*$ implies $\alpha$-strong-convexity of $f^*(-A^T\nu)$, where $\alpha  = \lambda \beta^*$ and $\lambda = \lambda_{min}(AA^T)$ is the minimum eigenvalue of $AA^T$. Since $A$ is full row-rank, it follows that $\lambda>0$. Finally, using the fact that $f_1 = f^*(-A^T\nu)$ is $\alpha$-strongly convex and $f_2 = \nu^Tb$ is convex, one obtains that $f_1+f_2 = f^*(-A^T\nu) + \nu^Tb = -g(\nu)$ is $\alpha$-strongly convex, or equivalently, $g$ is $\alpha$-strongly concave. 
\end{proof}

\section{Proof of Lemma \ref{M inv}}\label{app proof M inv}
\begin{proof}
Define $H_{xx} = \nabla_{xx} F$, $H_{xz} = \nabla_{xz} F$ and $H_{zz} = -\nabla_{zz} F$. Since $F$ is twice-continuously differentiable, one has that $\nabla_{zx} F = (\nabla_{xz} F)^T$. Define $H =\nabla^2 F(x,z)$ so that $H = \begin{bmatrix} H_{xx} & H_{xz}\\ H_{xz}^T & -H_{zz}\end{bmatrix}$. Note that $H_{xx}$ and $H_{zz}$ are positive definite for all $(x,z)\in U$ due to Assumption \ref{F assum 1}. The rank of the matrix $H$ satisfies (\cite{paige2008hua}) 
\begin{equation*}
\begin{split}
    \rank H & = \rank H_{xx} + \rank(-H_{zz} - H_{xz}^TH_{xx}^{-1}H_{xz}).
\end{split}
\end{equation*}
Now, since $H_{xx}$ is invertible for all $(x,z)\in U$, one has that $\rank H_{xx} = n$. Let $H_1 = H_{xx}$ and $H_2 =-H_{zz} - H_{xz}^TH_{xx}^{-1}H_{xz}$. Since $H_{xx}, H_{zz}$ are positive definite matrices, it follows that $H_2$ is also negative definite. Hence, one obtains that $\rank H_2 = m$. This implies that $\rank H = \rank H_1 + \rank H_2 = n+m$ for all $(x,z)\in U$, i.e., $\nabla^2 F(x,z)$ is full rank and hence, invertible for all $(x,z)\in U$. 
\end{proof}

\end{document}